\DeclareMathOperator{\C}{\mathbb C}
\DeclareMathOperator{\Z}{\mathbb Z}
\DeclareMathOperator{\N}{\mathbb N}
\DeclareMathOperator{\g}{\mathfrak g}
\DeclareMathOperator{\ssl}{\mathfrak{sl}}
\DeclareMathOperator{\A}{\mathscr A}
\newcommand{\co}{\left[}
\newcommand{\cf}{\right]}
\newcommand{\po}{\left(}
\newcommand{\pf}{\right)}
\newcommand{\keyword}[1]
{
  \small	
  \textbf{\textit{Keywords---}} #1
}
\newtheorem{theoreme}{Theorem}[]
\newtheorem*{maintheorems}{Main theorem}
\newtheorem{corollaire}{Corollary}[theoreme]
\newtheorem{proposition}[theoreme]{Proposition}
\newtheorem{definition}[theoreme]{Definition}
\newtheorem{remark}[]{Remark}
\title{Compatibility between truncation and coproducts for quantum affine algebra and Yangian of $\ssl_2(\C)$}
\author{Jérôme Milot \\ \tiny Université de Lille, France}
\address{Laboratoire Paul Painlevé, Université de Lille, France}
\email{jerome.milot@univ-lille.fr}
\begin{document}

\begin{abstract}
    We prove that the standard Drinfeld-Jimbo coproducts for Yangians and quantum affine algebras factorize through their truncated quotients in the case of $\ssl_2(\C)$. As an auxiliary result, we give formulas for the coproduct of the truncation series in both cases.
\end{abstract}

\maketitle
\keyword{Drinfeld-Jimbo quantum groups, Truncated Yangians, Truncated quantum affine algebras}

\section*{Introduction}
Let $\g$ be a complex finite-dimensional simple Lie algebra and $q$ a complex number which is not a root of unity. Let $Y(\g)$ be the Yangian and $U_q(\hat\g)$ the quantum affine algebra constructed by Drinfeld \cite{NewDrinfeld} as Hopf algebra deformations of the current algebra $\g[z]$ and the loop algebra $\g[z,z^{-1}]$ respectively.  

We are interested in certain quotients of Yangians and quantum affine algebras, called \textit{truncated Yangians} and \textit{truncated quantum affine algebras} respectively . Both quotients depend on a complex polynomial $a(z)$ and are denoted $Y^a(\g)$ and $U_q^a(\hat{\g})$. Their relations are given by some elements in the commutative Drinfeld-Cartan subalgebra of $Y(\g)$ and $U_q(\hat \g)$. Truncated Yangians in type A were introduced first by Brundan-Kleshchev \cite{Brunchev} to study representation theory of finite W-algebras, then generalized to other types in the context of quantized affine Grassmanian slices by Kamnitzer-Webster-Weekes-Yacobi \cite{KWWY} and in the study of quantized Coulomb branches of 3d N = 4 SUSY quiver gauge theories by Braverman-Finkelberg-Nakajima \cite{BFN}. Truncated quantum affine algebras were defined by Finkelberg-Tsymbaliuk \cite{Finkeliuk} to realize quantized K-theoretical Coulomb branches. 

We are interested in the following natural question : are the defining relations of truncated Yangians and truncated quantum affine algebras preserved by the standard Drinfeld-Jimbo coproduct of the two quantum groups ? In this paper, we confirm the first non trivial case $\g = \ssl_2$. Our main result is stated as follows (Theorem \ref{compatibilityangian} for the Yangian and Theorem \ref{compatibilityaffine} for the quantum affine algebra) :
\begin{maintheorems}
    Let $a(z)$ and $b(z)$ be two complex polynomials.
    \begin{itemize}
        \item The Drinfeld-Jimbo coproduct of the Yangian $Y(\ssl_2)$ induces an algebra morphism :
    \[ Y^{ab}(\ssl_2) \longrightarrow Y^a (\ssl_2)\otimes Y^b(\ssl_2).\]
        \item Similarly, the Drinfeld-Jimbo coproduct of the quantum affine algebra $U_q(\hat{\ssl_2})$ induces an algebra morphism :
    \[ U_q^{ab}(\hat{\ssl_2}) \longrightarrow U_q^a(\hat{\ssl_2}) \otimes U_q^b(\hat{\ssl_2}).\]
    \end{itemize}
\end{maintheorems}
 In recent works of \cite{KTWWY1,KTWWY2,Hernandez, ZhangHernandez} many interesting representations of Yangians and quantum affine algebras are shown to factorize through truncations. Our result enables to prove that all tensor products of representations therein factorize through truncations for $\ssl_2$.

Sections 1-3 are about the Yangian case. In section 1, we briefly recall the definition of truncated Yangian of $\ssl_2$. In section 2, we give a polynomial formula for the coproduct of truncation series in this case. In section 3, we use this formula to establish the compatibility between truncation and coproduct for the Yangian. 

Sections 4-6 deal with the quantum affine algebra. Here, the main difference in the computation in the proofs is that we use of q-exponentials instead of exponentials. 

Section 7 is a direct generalization of these results to the shifted Yangians and shifted quantum affine algebras. We omit their proofs because they follow from the non-shifted cases by standard zigzag arguments.

\subsection*{Acknowledgements}
The author acknowledges the support of CDP C2EMPI, as well as the French State under the France-2030 program, the University of Lille, the Initiative of Excellence of the University of Lille, the European Metropolis of Lille for their funding and support of the R-CDP-24-004-C2EMPI project.

\section{Yangian}
Fix $\ssl_2$ the Lie algebra defined by generators  $x^+,x^-,h$ and relations : 
\[ \co x^+,x^- \cf = h, \quad \co h,x^\pm \cf = \pm 2x^\pm.\]

\begin{definition}
    The \textbf{Yangian} of $\ssl_2$ is the associative algebra $Y$, defined by generators $x^+_n,x^-_n,h_n$ for $n \in \N$ subject to the following relations for $r,s \in \N$:
    \begin{gather*}
        \co h_r,h_s \cf = 0, \quad \co h_0,x^\pm_s \cf = \pm 2x^\pm_s, \\
        \co h_{r+1},x^\pm_s \cf - \co h_r,x^\pm_{s+1}   \cf = \pm\po h_rx^\pm_s + x^\pm_sh_r \pf, \\
        \co x^+_r,x^-_s \cf = h_{r+s}, \\
        \co x^\pm_{r+1},x^\pm_s \cf - \co x^\pm_r,x^\pm_{s+1}   \cf = \pm\po x^\pm_rx^\pm_s + x^\pm_sx^\pm_r \pf.
    \end{gather*}
\end{definition}

Define the generating series :
\[ x^\pm(z) := \sum\limits_{n\in\N} x^\pm_nz^{-n-1}, \quad h(z) := 1 + \sum\limits_{n\in\N} h_nz^{-n-1}.\]

Denote by $Y^=$ the subalgebra of $Y$ generated by $h_n$ for $n \in \N$.

The Yangian has a Hopf algebra structure whose coproduct is determined by :
    \begin{align*} 
    \Delta(x^\pm_{0}) &= x^\pm_0\otimes 1 + 1\otimes x_0^\pm; \quad \Delta(h_0) = h_0\otimes 1 + 1 \otimes h_0 \\
    \Delta(x^+_1) &= x_1^+ \otimes 1 + 1 \otimes x_1^+ + h_0 \otimes x_0^+. 
    \end{align*}

\begin{remark}
    There exists a general formula for the coproduct of $h(z)$ \cite[Definition 2.24]{Molev} :
    \[ \Delta(h(z)) = \sum\limits_{k=0}^\infty(-1)^k(k+1)(x^-(z+1))^kh(z) \otimes h(z) (x^+(z+1))^k .\] 
    This is one of our motivations for the study of modified series, which should have a simpler coproduct. 
\end{remark}

Let us define the GKLO series in order to compute their coproduct. 

\begin{definition}\cite{GKLO,KWWY,BFN}
    Let $m \in \N$ and $a(z)$ be  complex Laurent series in $z^{-1}$ of the leading term $z^{2m}$. The Gerasimov–Kharchev–Lebedev–Oblezin series, also called $A$-series, associated to $a(z)$ is the Laurent series in $z^{-1}$ of the leading term $z^m$, with coefficients in $Y^=$, uniquely determined by :
    \[ h(z) = \frac{a(z)}{A^a(z)A^a(z-1)} \in 1 + z^{-1}Y^=[[z^{-1}]].\]
    One can write :
    \[ A^a(z) = z^m + \sum\limits_{k \leqslant m} A^a_{-k}z^{k-1}.\]
    The \textbf{truncated Yangian} $Y^a$ is the quotient of $Y$ by the two-sided ideal generated by $(A^a_k)_{k\geqslant 0}$.
\end{definition}

The GKLO series were initially introduced in \cite[Lemma 2.1]{GKLO} and the definition of truncated Yangians was given in \cite[Theorem 4.5]{KWWY} and \cite[Theorem B.15]{BFN}.

\begin{remark}
    The algebra $Y^a$ is completely characterized by the fact that the image of the series $A(z)$ is a polynomial in $z$. We will take advantage of this remark to check the compatibility between the coproduct and the truncation.
\end{remark}

\section{Coproduct of A-series}
We start by computing a coproduct formula for $A^1(z)$, using purely algebraic arguments. In fact, from this formula we will deduce the general formula for $A^a(z)$ for an arbitrary $a(z)$. Let then $A(z) := A^1(z)$.

We first study an auxiliary series called $S$-series defined by difference equation with $A(z)$. Initially introduced in \cite{Zhang} for any complex simple lie algebra in order to construct R-matrices for shifted Yangians, its coproduct  is already known for $ \ssl_2$. 

\begin{definition}
    \cite[Proposition 4.1]{Zhang} The series $S(z)$ is the unique power series in $z^{-1}$ of the constant term 1 with coefficients in $Y^{=}$ that solves the following difference equation :
    \[ S(z+1) = S(z)A(z)K(z) \in 1 + z^{-1}Y^{=}[[z^{-1}]] \]
    where $K(z) := \exp\po -\frac{1}{2}h_0 \sum\limits_{k>0} \frac{(-z)^{-k}}{k} \pf.$
\end{definition}

\begin{proposition}\label{coproduitS}
    \cite[Example 5.8]{Zhang} We have :
    \[ \Delta(S(z)) = (1 \otimes S(z))\exp(-z^{-1}x^{-}_{0}\otimes x^{+}_{0})(S(z) \otimes 1).\]
\end{proposition}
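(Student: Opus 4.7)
The plan is to invoke the uniqueness of $S(z)$ as the series in $1 + z^{-1} Y^=[[z^{-1}]]$ satisfying $S(z+1) = S(z) A(z) K(z)$. Applying $\Delta$, the series $\Delta(S(z))$ is characterized by the leading term $1 \otimes 1$ together with the recurrence $\Delta(S(z+1)) = \Delta(S(z))\,\Delta(A(z))\,\Delta(K(z))$. Denoting by $T(z)$ the right-hand side of the claimed formula, it therefore suffices to check that $T(z)$ has leading term $1 \otimes 1$ (immediate, since $\exp(-z^{-1} x_0^- \otimes x_0^+) \to 1 \otimes 1$ as $z \to \infty$) and satisfies the same recurrence.

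For the recurrence, the key structural fact is that $K(z) = (1+z^{-1})^{h_0/2}$, using $\sum_{k>0}(-z)^{-k}/k = -\log(1+z^{-1})$. Since $h_0$ is primitive, $K(z)$ is grouplike, so $\Delta(K(z)) = K(z) \otimes K(z)$, and it satisfies $K(z) x_0^\pm K(z)^{-1} = (1+z^{-1})^{\pm 1} x_0^\pm$; in particular, conjugation by $1 \otimes K(z)$ sends $\exp(-(z+1)^{-1} x_0^- \otimes x_0^+)$ to $\exp(-z^{-1} x_0^- \otimes x_0^+)$. Expanding $T(z+1)$ via $S(z+1) = S(z) A(z) K(z)$ and pushing the two $K(z)$ factors through the exponential reassembles the exponential of $T(z)$ together with a trailing $K(z) \otimes K(z) = \Delta(K(z))$; the analogous manipulation with the two $A(z)$ factors must then produce exactly $\Delta(A(z))$.

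The main obstacle is precisely the adjoint action of $A(z)$ on $x_0^\pm$, which unlike that of $K(z)$ does not reduce to a scalar: the Yangian relation $[h_{r+1}, x^\pm_0] - [h_r, x^\pm_1] = \pm (h_r x^\pm_0 + x^\pm_0 h_r)$ recursively pulls in the higher generators $x_n^\pm$. One way to handle this is by induction on the coefficient of $z^{-n}$, using $h(z) A(z) A(z-1) = 1$ and the known Molev-type formula for $\Delta(h(z))$ to pin down $\Delta(A(z))$ and $\Delta(S_n)$ at each step. Alternatively, as in \cite[Example 5.8]{Zhang}, the identity is derived uniformly for an arbitrary simple Lie algebra from the quasi-triangular structure of the Yangian, and the $\ssl_2$ case follows by specialization.
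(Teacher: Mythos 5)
First, a point of comparison: the paper gives no proof of this proposition at all --- it is imported verbatim from \cite[Example 5.8]{Zhang} and used as the starting input for everything that follows. So the only honest benchmark for your attempt is Zhang's argument, which (as you note in your last sentence) proceeds via the quasi-triangular structure of the Yangian, not via the difference equation.

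Your proposed verification has a genuine gap at its decisive step. The strategy --- characterize $\Delta(S(z))$ as the solution of $F(z+1)=F(z)\,\Delta(A(z))\,\Delta(K(z))$ with leading term $1\otimes 1$, and check that the candidate $T(z)$ satisfies the same recurrence --- requires an independent expression for $\Delta(A(z))$. But in this paper $\Delta(A(z))$ is \emph{deduced from} the present proposition (Proposition \ref{premform} and Theorem \ref{formulecopro}), so invoking it here is circular unless you first extract $\Delta(A(z))$ from Molev's formula for $\Delta(h(z))$ together with $h(z)A(z)A(z-1)=1$; that extraction means solving another difference equation in the noncommutative algebra $Y\otimes Y$ and is a substantial computation which you gesture at but do not perform. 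Your treatment of the $K(z)$ factors is correct and is the easy half: $K(z)=(1+z^{-1})^{h_0/2}$ is group-like and conjugates $x_0^{\pm}$ by the scalar $(1+z^{-1})^{\pm 1}$, which indeed converts $\exp(-(z+1)^{-1}x_0^-\otimes x_0^+)$ into $\exp(-z^{-1}x_0^-\otimes x_0^+)$. The hard half is the $A(z)$ factors: by Proposition \ref{permutation}, conjugation by $A(z)$ does not rescale $x_0^{\pm}$ but produces the entire tower of modes $x_k^{\pm}$, so the exponential does not simply reassemble, and the sentence ``the analogous manipulation with the two $A(z)$ factors must then produce exactly $\Delta(A(z))$'' is an assertion of the required identity, not a proof of it. (A secondary point worth recording if you pursue this route: uniqueness of solutions of the difference equation is only immediate in the commutative algebra $Y^{=}[[z^{-1}]]$; in $Y\otimes Y$ the coefficient recursion has the form $F_{n}\big(n+H_1\big)=(\text{lower order})$, and one must observe that $A(z)K(z)=1+O(z^{-2})$, i.e.\ $H_1=0$, for the induction to close.) As written, the proposal therefore reduces either to an unexecuted induction or to a citation of \cite{Zhang} --- the latter being exactly what the paper itself does.
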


\begin{remark}
    Moreover, the series $K(z)$ is defined as the exponential of a primitive element ($\Delta(h_0) = h_0 \otimes 1 + 1 \otimes h_0$). It is then group-like :
\[ \Delta(K(z)) = K(z) \otimes K(z).\]
\end{remark}

Before computing the coproduct of $A(z)$, let us recall the following relations in $Y$. 

\begin{proposition}\label{permutation}
    \cite[(2.2),(4.10)]{Zhang}
    All the coefficients of $A(z)$ and $S(z)$ commute and
    \begin{align*}
        S(z)x^{-}_{0} &= (x^{-}_{0} - x^-_1 z^{-1}) S(z), \\
        x^{+}_{0}S(z) &= S(z)(x^{+}_{0}-x^+_1 z^{-1}), \\
        A(z)x^{-}_{0} &= x^{-}_{0}A(z) + \sum\limits_{k \geqslant 0}x^-_k z^{-k-1} A(z), \\ 
        x^{+}_{0}A(z) &= A(z)x^{+}_{0} + A(z)\sum\limits_{l \geqslant 0} x^+_l z^{-l-1}.
    \end{align*}
\end{proposition}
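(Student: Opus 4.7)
The plan is to derive the four commutation relations together with the commutativity of coefficients, beginning with the most basic. The commutativity of the coefficients of $A(z)$ follows from the defining equation $h(z)A(z)A(z-1)=1$: since $h(z)$ has commuting coefficients by $[h_r,h_s]=0$, matching $z^{-k-1}$-coefficients determines each $A_k$ recursively as a polynomial in the $h_n$, so all $A_k$ lie in the commutative subalgebra $Y^{=}$. The same reasoning extends to $S(z)$ via the difference equation $S(z+1)=S(z)A(z)K(z)$, since $K(z)=\exp(\frac{1}{2}h_0\log(1+z^{-1}))$ is a function of $h_0$ alone.

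For the commutation with $x^-_0$, my approach is induction on the $z^{-1}$-degree. Writing $A(z)=1+\sum_{k\geqslant 0}A_k z^{-k-1}$, comparing coefficients of $z^{-k-1}$ in the target identity $A(z)x^-_0=x^-_0 A(z)+x^-(z)A(z)$ reduces the claim to the family $[A_k,x^-_0]=x^-_k+\sum_{j=0}^{k-1}x^-_j A_{k-1-j}$ for $k\geqslant 0$. The base case $k=0$ is $[A_0,x^-_0]=x^-_0$, which follows from $A_0=-h_0/2$ (obtained by matching $z^{-1}$-terms in $A(z)A(z-1)=h(z)^{-1}$) together with $[h_0,x^-_0]=-2x^-_0$. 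For the inductive step, I would express $A_k$ as a polynomial in $h_0,\dots,h_k$ via the recursion from $A(z)A(z-1)=h(z)^{-1}$, and evaluate $[A_k,x^-_0]$ using the $[h_r,x^-_0]$ values produced inductively by the Yangian relation $[h_{r+1},x^-_s]-[h_r,x^-_{s+1}]=-(h_r x^-_s+x^-_s h_r)$. The $x^+_0$ case is entirely symmetric.

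With the $A(z)$-relations in hand, the $S(z)$-relations follow from the difference equation. The key auxiliary identity is $K(z)x^-_0=\frac{z}{z+1}x^-_0 K(z)$, immediate from rewriting $K(z)=(1+z^{-1})^{h_0/2}$ and using $h_0 x^-_0=x^-_0(h_0-2)$. Applying $x^-_0$ to both sides of $S(z+1)=S(z)A(z)K(z)$ and invoking this identity together with the just-proved $A(z)$-relation produces an expression for $S(z+1)x^-_0$; comparing with the expression predicted by the target $S$-relation at shifted argument reduces the problem to an algebraic consistency check that can be verified order by order.

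The hard part will be the combinatorics of the inductive step for the $A(z)$-identity: the explicit polynomial expressing $A_k$ in terms of $h_0,\dots,h_k$ grows in complexity with $k$, and verifying that its commutator with $x^-_0$ matches the prescribed right-hand side requires careful bookkeeping across all orders. A pragmatic fallback, should the closed form become unwieldy, is to work entirely order by order in $z^{-1}$, generating at each new order exactly the Yangian relations needed without attempting to track $A_k$ in closed form.
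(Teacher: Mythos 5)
First, note that the paper offers no internal proof of this proposition: it is imported verbatim from Zhang \cite[(2.2),(4.10)]{Zhang}, so your proposal is competing with a citation rather than with an argument. Your preliminary observations are all correct: the coefficients of $A(z)$ and $S(z)$ commute because they lie in $Y^=$ by construction; $A_0=-\tfrac12 h_0$ gives the base case $[A_0,x^-_0]=x^-_0$; the reduction of the third identity to $[A_k,x^-_0]=x^-_k+\sum_{j=0}^{k-1}x^-_jA_{k-1-j}$ is the right coefficient extraction; and $K(z)=(1+z^{-1})^{h_0/2}$ does satisfy $K(z)x^-_0=\tfrac{z}{z+1}x^-_0K(z)$.

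The gap is in the inductive step, which as written is not an induction: no hypothesis about $[A_{k-1},x^-_0]$ is used to derive the statement for $[A_k,x^-_0]$. What you actually propose is to expand $A_k$ as an explicit polynomial in $h_0,\dots,h_k$ and verify the identity by direct computation, and your fallback is to "work order by order" -- but an order-by-order check is an unbounded computation, not a proof valid for all $k$. A genuine argument needs a uniform mechanism, e.g.\ first packaging the Yangian relations into a generating-series identity for $h(z)x^-(w)$, then using $h(z)=(A(z)A(z-1))^{-1}$ together with the uniqueness in the definition of $A(z)$ to pin down $A(z)x^-_0A(z)^{-1}$; but that route forces you to control $A(z)x^-_nA(z)^{-1}$ for \emph{all} $n$ simultaneously, so the statement must be strengthened before any induction or uniqueness argument closes. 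The same defect recurs in your treatment of $S(z)$: substituting the $A(z)$-relation into $S(z+1)=S(z)A(z)K(z)$ produces the term $S(z)x^-(z)$, i.e.\ requires commuting $S(z)$ past every $x^-_k$, so the "algebraic consistency check" cannot be run knowing only the relations for $x^-_0$ and $x^-_1$; you would need to prove $S(z)x^-_n=(x^-_n-x^-_{n+1}z^{-1})S(z)$ for all $n$ at once (which is in fact what Zhang's (4.10) asserts). As it stands, the central claims of the proposition are asserted with a plan of attack rather than proved.
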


\subsection{Simplification of the problem}
To compute the coproduct of $A(z)$, we will need some binomial formulas. Let us return to a more general linear algebra problem and then apply it to our reasoning.

\begin{proposition}
    Let $K$ be a field and $A$ be a $K$-algebra. Let $a,b \in A$ and $c \in K$ be such that $ab - ba =ca^2 $.
    Then, for all $n\in \N$ : 
    \[ (a+b)^n = \sum\limits_{k=0}^n \binom{n}{k} \prod\limits_{j=0}^{k-1} (1 - jc) a^kb^{n-k}.\]
\end{proposition}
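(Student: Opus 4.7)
The plan is to prove this by induction on $n$, with the key preparatory step being an auxiliary commutation formula that lets us push powers of $a$ past $b$.

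First I would establish the following lemma by induction: under the hypothesis $ab - ba = ca^2$, one has $a^n b = (b + nca)a^n$, equivalently $ba^n = a^n b - nca^{n+1}$, for all $n \in \N$. The base case $n = 1$ is just the hypothesis rewritten as $ab = (b+ca)a$. For the inductive step,
\[ a^{n+1}b = a\cdot a^n b = a(b+nca)a^n = (ab + nca^2)a^n = ((b+ca)a + nca^2)a^n = (b+(n+1)ca)a^{n+1}. \]

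Next, I would prove the main identity by induction on $n$, with the trivial case $n=0$ (and the easy check $n=1$). Write $\alpha_k^{(n)} := \binom{n}{k}\prod_{j=0}^{k-1}(1-jc)$. Assuming the formula at step $n$, multiply on the left by $(a+b)$ and use the lemma to obtain
\[ (a+b)\,a^k b^{n-k} = a^{k+1}b^{n-k} + ba^k b^{n-k} = a^{k+1}b^{n-k} + (a^k b - kc a^{k+1})b^{n-k} = (1-kc)\,a^{k+1}b^{n-k} + a^k b^{n+1-k}. \]
Collecting coefficients after reindexing $k+1 \mapsto k$ in the first piece, the coefficient of $a^k b^{n+1-k}$ in $(a+b)^{n+1}$ becomes
\[ \alpha_k^{(n)} + (1-(k-1)c)\,\alpha_{k-1}^{(n)} = \binom{n}{k}\prod_{j=0}^{k-1}(1-jc) + \binom{n}{k-1}\prod_{j=0}^{k-1}(1-jc), \]
where I absorbed the factor $(1-(k-1)c)$ into the product of $\alpha_{k-1}^{(n)}$. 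Pascal's identity $\binom{n}{k}+\binom{n}{k-1}=\binom{n+1}{k}$ then gives exactly $\alpha_k^{(n+1)}$, closing the induction (with the boundary terms $k=0$ and $k=n+1$ matching directly, as the product at $k=n+1$ absorbs the missing factor and $\alpha_{-1}^{(n)}$ is vacuously zero).

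I do not anticipate a real obstacle: the whole difficulty is concentrated in finding the right commutation rule $ba^n = a^n b - nca^{n+1}$, after which the identity reduces to a binomial-style recursion whose matching between the factor $(1-(k-1)c)$ produced by the commutation and the new factor appearing in $\prod_{j=0}^{k-1}(1-jc)$ is the only bookkeeping detail to get right.
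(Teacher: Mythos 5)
Your proof is correct and follows essentially the same route as the paper: both establish that the coefficients of $(a+b)^n$ in the ordered monomials $a^k b^{n-k}$ satisfy the recurrence $u_{k,n+1}=u_{k,n}+(1-(k-1)c)u_{k-1,n}$ and that the closed form $\binom{n}{k}\prod_{j=0}^{k-1}(1-jc)$ satisfies it too. You merely make explicit the commutation lemma $ba^n = a^n b - nca^{n+1}$ and the Pascal-identity verification that the paper leaves to the reader.
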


\begin{proof}
    Denote $(a+b)^n = \sum\limits_{k=0}^n \alpha_{k,n} a^kb^{n-k}$. To prove the proposition, we just have to check if the sequence $(\alpha_{k,n})_{k,n\in\N}$ satisfies the same recurrence relation as the sequence $(\beta_{k,n})_{k,n\in \N}$ defined by $\beta_{k,n} := \binom{n}{k}\prod\limits_{j = 0}^{k-1}(1-jc)$.

    Indeed, both satisfy the recurrence relation, which admits a unique solution :
    \begin{align*} u_{0,0} &= u_{1,0} = u_{0,1} = 1; \\ u_{k,n+1} &= u_{k,n} + (1-(k-1)c)u_{k-1,n}.
    \end{align*}
\end{proof}

\subsection{Coproduct}
We aim to compute $\Delta(A(z))$. The series $K(z)$ is group-like : we then start by computing the coproduct of $\Tilde{A}(z) := S(z)^{-1} S(z+1)$. 

\begin{proposition}\label{premform}
    \[ \Delta(\Tilde A(z)) = \po S(z)^{-1} \otimes S(z+1) \pf \po 1 + z^{-1}(z+1)^{-1}x^{-}_{0} \otimes x^{+}_{0} \pf \po S(z+1) \otimes S(z)^{-1} \pf.  \]
\end{proposition}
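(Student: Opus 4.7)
The plan is to start from the factorization
\[
\Delta(\Tilde A(z)) = \Delta(S(z))^{-1}\Delta(S(z+1))
\]
and plug in Proposition \ref{coproduitS}. Writing $T(z) := S(z)^{-1}$ and $X := x_0^{-}\otimes x_0^{+}$, one has
\[
\Delta(S(z))^{-1} = (T(z)\otimes 1)\,\exp(z^{-1}X)\,(1\otimes T(z)),
\]
\[
\Delta(S(z+1)) = (1\otimes S(z+1))\,\exp(-(z+1)^{-1}X)\,(S(z+1)\otimes 1).
\]
Multiplying these and collapsing the middle pair $(1\otimes T(z))(1\otimes S(z+1)) = 1\otimes \Tilde A(z)$ yields
\[
\Delta(\Tilde A(z)) = (T(z)\otimes 1)\,e^{z^{-1}X}\,(1\otimes \Tilde A(z))\,e^{-(z+1)^{-1}X}\,(S(z+1)\otimes 1),
\]
so the statement reduces to proving
\[
e^{z^{-1}X}(1\otimes \Tilde A(z))e^{-(z+1)^{-1}X} = (1\otimes S(z+1))(1 + z^{-1}(z+1)^{-1}X)(1\otimes T(z)).
\]

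The next step is to expand the two exponentials as formal series in $X$. Since $X = x_0^{-}\otimes x_0^{+}$, the product on the left produces a double sum whose $(m,n)$-term is $(x_0^{-})^{m+n}\otimes (x_0^{+})^{m}\Tilde A(z)(x_0^{+})^{n}$, carrying the scalar weight $z^{-m}(-(z+1)^{-1})^{n}/(m!\,n!)$. Using the commutation relations of Proposition \ref{permutation} (equivalently, writing $\Tilde A(z) = A(z)K(z)$ and noting the scaling $K(z)x_0^{+} = (z+1)z^{-1}x_0^{+}K(z)$ together with $[A(z), x_0^{+}] = -A(z)x^{+}(z)$), I would move $\Tilde A(z)$ past the powers of $x_0^{+}$ to a fixed side. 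Collecting terms at each total degree $N = m+n$ then produces a binomial-type sum $\sum_{m=0}^{N}\binom{N}{m}\alpha^{m}\beta^{N-m}$ in two non-commuting elements whose commutator, after the previous moves, should acquire the quadratic form $[a,b] = a^{2}$.

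At this point the auxiliary binomial identity of the preceding subsection intervenes with $c = 1$: the factor $\prod_{j=0}^{k-1}(1-j)$ vanishes for $k \geqslant 2$, so only the $k = 0$ and $k = 1$ terms of each $(a+b)^{N}$ contribute. The double exponential therefore collapses into a two-term expression, namely $1 \otimes \Tilde A(z)$ plus a single $X$-linear correction. Applying the partial fraction identity $z^{-1} - (z+1)^{-1} = z^{-1}(z+1)^{-1}$ then combines the two exponential scalars into the claimed rational factor, and one recognises the leftover $S(z+1)$ on the left and $T(z)$ on the right sitting on either side of $X$. The main obstacle I anticipate is verifying that, after $\Tilde A(z)$ has been pushed to one side, the rational rescaling coming from $K(z)$ and the infinite-series correction $x^{+}(z)$ coming from $[A(z), x_0^{+}]$ really conspire to produce a clean $c = 1$ quadratic commutation; hitting that scalar on the nose is precisely what forces the infinite exponential expansion to truncate at linear order, and constitutes the computational heart of the argument.
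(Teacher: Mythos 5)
Your setup is correct and coincides with the paper's first step: both arguments write $\Delta(\Tilde A(z))=\Delta(S(z))^{-1}\Delta(S(z+1))$, insert Proposition \ref{coproduitS}, and reduce the claim to showing that a sandwiched double exponential collapses to a two-term expression. Where you diverge is in how you normalize the middle, and that is exactly where your argument has a genuine gap. You propose to keep $\Tilde A(z)=A(z)K(z)$ as a single block and commute it past the powers of $x_0^+$ via $[A(z),x_0^+]=-A(z)x^+(z)$ and the $K(z)$-scaling. Carrying this out turns the middle into $(1\otimes\Tilde A(z))\exp(V+W)\exp(-V)$ with $V=(z+1)^{-1}x_0^-\otimes x_0^+$ and $W=(z+1)^{-1}x_0^-\otimes x^+(z)$, and the collapse you want, $\exp(V+W)\exp(-V)=1+W$ (the $c=1$ case of the binomial lemma applied to $a=W$, $b=V$), is equivalent to the identity $[x^+(z),x_0^+]=x^+(z)^2$ in $Y$, i.e. $[x_l^+,x_0^+]=\sum_{i+j=l-1}x_i^+x_j^+$. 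That identity is true, but it appears nowhere in Proposition \ref{permutation}, and you neither state nor prove it --- you explicitly defer it as the ``computational heart''. Until it is supplied, the truncation of the exponential at linear order is unjustified. You would additionally need $x_0^+S(z)^{-1}=S(z)^{-1}\sum_{k\geqslant 0}x_k^+z^{-k}$ (equivalently $S(z)x_0^+S(z)^{-1}=zx^+(z)$) to convert the surviving correction $1+W$ back into the asserted form with $x_0^-\otimes x_0^+$ sandwiched between $S(z+1)$ and $S(z)^{-1}$; this final reassembly is only gestured at.

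For contrast, the paper avoids the full current $x^+(z)$ altogether: it splits $\Tilde A(z)=S(z)^{-1}S(z+1)$ and pushes the two $S$-factors to opposite outer ends using $x_0^+S(w)=S(w)(x_0^+-x_1^+w^{-1})$, so that only the modes $x_0^+$ and $x_1^+$ ever appear. The middle then becomes $\exp(X+Y)\exp(-z(z+1)^{-1}X-Y)$ with $XY-YX=\lambda X^2$, $\lambda=(z+1)^{-1}$, coming from the single relation $[x_0^+,x_1^+]=-(x_0^+)^2$, and the collapse is the identity $\exp(X+Y)=(1+\lambda X)\exp((1-\lambda)X+Y)$, verified degreewise with the binomial lemma at $c=\lambda$. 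Your $c=1$ vanishing trick is actually a slicker way to prove that very identity (take $a=\lambda X$, $b=(1-\lambda)X+Y$, so that $ab-ba=a^2$ and $\prod_{j=0}^{k-1}(1-j)$ kills every term with $k\geqslant 2$), but it should be applied to the pair $(X,Y)$ built from $x_0^+$ and $x_1^+$, not to the pair involving $x^+(z)$ whose quadratic commutation you have not established. As written, your proof is incomplete at precisely the step you flag.
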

\begin{proof}
From proposition \ref{coproduitS} :
\begin{align*}  \Delta(\Tilde A(z)) = \sum\limits_{k,l} \po  S(z)^{-1} \frac{z^{-k}}{k!} \frac{\po -(z+1)  \pf^{-l} }{l!} (x^{-}_{0})^{k+l} S(z+1) \pf   \otimes \po (x^{+}_{0})^k S(z)^{-1} S(z+1) (x^{+}_{0})^l \pf. 
\end{align*}

Applying proposition \ref{permutation} many times, we get :
\begin{align*}
    (x^{+}_{0})^k S(z+1) &= S(z+1) \po x^{+}_{0} - x^+_1 (z+1)^{-1} \pf^k \\
    S(z)^{-1} (x^{+}_{0})^l &= \po x^{+}_{0} - x^+_1 z^{-1} \pf^l S(z)^{-1}.
\end{align*}

Then we can write
\begin{align*} \Delta(\Tilde{A}(z))  = &\po S(z)^{-1} \otimes S(z+1) \pf \po \sum\limits_{k = 0}^\infty \frac{z^{-k}}{k!} (x^{-}_{0})^k \otimes \po x^{+}_{0} - x^+_1 (z+1)^{-1}  \pf^k \pf  \\ &\po \sum\limits_{l=0}^\infty \frac{\po -(z+1) \pf^{-l}}{l!} (x^{-}_{0})^l \otimes \po x^{+}_{0} - x^+_1 z^{-1} \pf^l \pf \po S(z+1) \otimes S(z)^{-1} \pf.
\end{align*}

Let
\[ X :=  z^{-1} x^{-}_{0} \otimes x^{+}_{0}; \quad Y = -z^{-1}(z+1)^{-1} x^{-}_{0} \otimes x^+_1\]
in order to write the coproduct :

\[ \Delta(\Tilde A(z)) = \po S(z)^{-1} \otimes S(z+1) \pf \exp(X+Y)\exp(-z(z+1)^{-1} X-Y)\po S(z+1) \otimes S(z)^{-1} \pf. \]

Let $\lambda := (z+1)^{-1}$. Then
    \[ 1 + z^{-1}(z+1)^{-1}x^{-}_{0} \otimes x^{+}_{0} = 1 + \lambda X. \]
    and
    \[ XY - YX = \lambda X^2.\]

From the current computed coproduct formula, we just have to prove  
\[ \exp\po X + Y\pf = \po 1 + \lambda X \pf\exp\po(1-\lambda) X + Y\pf.\]

Recall that we work in $Y\otimes Y$, which is a $\Z$-bigraduated algebra. Therefore, to prove the previous equality, we prove it for each term according to their biweight. We then want to prove
\[ \tag{$\dagger$}\frac{1}{n!} \po X+Y \pf^n = \frac{1}{n!}((1-\lambda) X + Y)^n + \frac{1}{(n-1)!}\lambda X\ \po (1-\lambda) X + Y \pf^{n-1}\]
for all $n$ (it is the term of biweight $(n\alpha,-n\alpha)$).

But by the previous subsection :
\[ (X+Y)^n = \sum\limits_{k=0}^n \binom{n}{k} \prod\limits_{j=0}^{k-1} (1 - j \lambda) X^k Y^{n-k}. \]

We can then check the equality $(\dagger)$, by fixing $k \in \{0,\dots,n\}$ and looking at the coefficient of $X^k Y^{n-k}$.
\end{proof}

Now, let us prove one of the main results of this paper.
\begin{theoreme}\label{formulecopro}
    Let $m,n$ be two integers. Let $a(z)$ and $b(z)$ be complex Laurent series in $z^{-1}$ of leading terms $z^{2m}$ and $z^{2n}$ respectively. We have :
    \[ \Delta(A^{ab}(z)) = A^a(z) \otimes A^b(z) + \co A^a(z),x^{-}_{0} \cf \otimes \co x^{+}_{0}, A^b(z) \cf.\]
\end{theoreme}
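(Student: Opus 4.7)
The plan is to reduce the general statement to the case $a = b = 1$, then extract $\Delta(A(z))$ from Proposition~\ref{premform} using the group-likeness of $K(z)$. For the reduction, combining $h(z) = 1/(A(z)A(z{-}1))$ with $h(z) = a(z)/(A^a(z)A^a(z{-}1))$ gives $A^a(z)A^a(z{-}1) = a(z)A(z)A(z{-}1)$. A recursive coefficient computation produces a unique scalar Laurent series $g_a(z) = z^m + O(z^{m-1})$ with $g_a(z)g_a(z{-}1) = a(z)$, and the uniqueness in the definition of GKLO series then forces $A^a(z) = g_a(z)A(z)$; in particular $g_{ab} = g_a g_b$. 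Since scalars are central and commute with $x_0^\pm$, both sides of the theorem scale by $g_a(z)g_b(z)$, reducing it to
\[
\Delta(A(z)) = A(z) \otimes A(z) + [A(z), x_0^-] \otimes [x_0^+, A(z)].
\]

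For this reduced identity, since $K(z)$ is group-like and $\tilde A(z) = A(z)K(z)$, I have $\Delta(A(z)) = \Delta(\tilde A(z))(K(z)^{-1} \otimes K(z)^{-1})$. Expanding Proposition~\ref{premform} and using commutativity inside $Y^=$ to identify $S(z)^{-1}S(z{+}1) = \tilde A(z)$ yields
\[
\Delta(\tilde A(z)) = \tilde A(z) \otimes \tilde A(z) + z^{-1}(z{+}1)^{-1}\bigl(S(z)^{-1} x_0^- S(z{+}1)\bigr) \otimes \bigl(S(z{+}1) x_0^+ S(z)^{-1}\bigr).
\]
The diagonal piece contributes $A(z) \otimes A(z)$. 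For the cross piece, the identity $K(z) = (1+z^{-1})^{h_0/2}$ gives $K(z)x_0^\pm K(z)^{-1} = (1+z^{-1})^{\pm 1} x_0^\pm$, and together with $S(z{+}1)K(z)^{-1} = S(z)A(z)$ this reduces it (after $(1+z^{-1}) z^{-1}(z{+}1)^{-1} = z^{-2}$) to $z^{-2}\bigl(S(z)^{-1} x_0^- S(z)\bigr) A(z) \otimes A(z) \bigl(S(z) x_0^+ S(z)^{-1}\bigr)$.

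The heart of the proof is then to establish $S(z)^{-1} x_0^- S(z) = z\, x^-(z)$ and $S(z) x_0^+ S(z)^{-1} = z\, x^+(z)$; granting these, the cross piece becomes $x^-(z)A(z) \otimes A(z)x^+(z) = [A(z), x_0^-] \otimes [x_0^+, A(z)]$ by Proposition~\ref{permutation}. For the $-$ identity, apply $\mathrm{Ad}$ to the difference equation $S(z{+}1) = S(z)A(z)K(z)$ evaluated at $x_0^-$: the inputs $\mathrm{Ad}(A(z))(x_0^-) = x_0^- + x^-(z)$ (a reformulation of Proposition~\ref{permutation}), $\mathrm{Ad}(K(z))(x_0^-) = (1+z^{-1})^{-1}x_0^-$, and $\mathrm{Ad}(S(z))(x_0^-) = x_0^- - x_1^- z^{-1}$ at $z$ and $z+1$ combine to force $\mathrm{Ad}(S(z))(x^-(z)) = x_0^-/z$, equivalent to the desired identity. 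The $+$ analog then follows from the Chevalley anti-involution $\omega \colon x_k^\pm \mapsto x_k^\mp$, $h_k \mapsto h_k$, which preserves the Yangian relations and fixes both $S(z)$ and $A(z)$ by uniqueness in the difference equation. The main obstacle is this very identity for $x^-(z)$: since Proposition~\ref{permutation} controls only $x_0^\pm$ and $x_1^\pm$ with $S(z)$, a naive iteration is not available, and the bootstrap via the difference equation is essential.
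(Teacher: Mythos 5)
Your proof is correct and follows essentially the same route as the paper: reduce to $a=b=1$ via the multiplicative scalar square root of $a(z)$, expand Proposition~\ref{premform}, strip off the group-like $K(z)$, and convert the cross term into $\co A(z),x_0^-\cf \otimes \co x_0^+,A(z)\cf$ using Proposition~\ref{permutation}. The one place you go beyond the paper is in actually deriving the conjugation identities $S(z)^{-1}x_0^-S(z) = z\,x^-(z)$ and $S(z)x_0^+S(z)^{-1} = z\,x^+(z)$ by applying $\mathrm{Ad}$ to the difference equation (plus the Chevalley anti-involution for the $+$ case); the paper merely ``recalls'' these relations, which are not literally among those stated in Proposition~\ref{permutation}, so your bootstrap is a welcome self-contained justification of that step.
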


\begin{proof}
    There exists a unique $a^*(z) \in z^m(1+z^{-1}\C[[z^{-1}]])$ such that 
    \[ a(z) = a^*(z)a^*(z-1).\]

    Moreover, the operation $a(z) \mapsto a^*(z)$ is multiplicative :
    \[ (ab)^*(z) = a^*(z)b^*(z).\]
    
    Finally, one has
    \[ A^a(z) = a^*(z)A^1(z).\]
    Therefore, we just have to prove the result for $A^1(z) = A(z)$.
    
    We just have to reorder the terms from proposition \ref{premform}. Let us recall the following relations :
    \[ S(z)^{-1}x^{-}_{0} = \sum\limits_{k\geqslant 0} x^{-}_k z^{-k}S(z)^{-1}; \quad
    x^{+}_{0}S(z)^{-1} = S(z)^{-1}\sum\limits_{k \geqslant 0} x^{+}_k z^{-k}.
    \]
    We can then write :
    \[ \Delta(A(z)) = \po 1 \otimes S(z)^{-1}S(z+1) \pf \po 1 + z^{-1}(z+1)^{-1} \sum\limits_{k,l \geqslant 0} z^{-k} x^{-}_k \otimes z^{-l} x^{+}_l \pf \po A(z)\otimes K(z) \pf. \]
    Let us invert the terms :
    \[ x^{+}_l K(z) = \po \frac{z}{z+1} \pf^{-1} K(z) x^{+}_l.\]
    Then :
    \[ \Delta(A(z)) = A(z) \otimes A(z) + \sum\limits_{k,l \geqslant 0} z^{-k-1} x^{-}_k A(z) \otimes A(z) x^{+}_l z^{-l-1}.\]
    Combining with proposition \ref{permutation} :
    \begin{align*} 
    A(z)x^{-}_{0} &= x^{-}_{0}A(z) + \sum\limits_{k \geqslant 0}x^{-}_k z^{-k-1} A(z) \\  
     x^{+}_{0}A(z) &= A(z)x^{+}_{0} + A(z)\sum\limits_{l \geqslant 0} x^{+}_l z^{-l-1}, \end{align*}
    we then get the desired formula.
\end{proof}

\section{Coproducts for truncated Yangians}
We arrive at the main result of this article for Yangians : the compatibility between truncation and coproducts.

\begin{theoreme}\label{compatibilityangian}
    Let $a(z) \in z^{2m}(1 + z^{-1}\C[[z^{-1}]])$ and $b(z) \in z^{2n}(1 + z^{-1}\C[[z^{-1}]])$. Then the coproduct of the Yangian induces an algebra morphism :
    \[ Y^{ab} \longrightarrow Y^a\otimes Y^b.\]
\end{theoreme}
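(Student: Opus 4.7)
The strategy is to use the coproduct formula from Theorem \ref{formulecopro} directly. Since the algebra morphism $\Delta \colon Y \to Y \otimes Y$ composes with the canonical surjection $Y \otimes Y \twoheadrightarrow Y^a \otimes Y^b$ to give an algebra morphism $Y \to Y^a \otimes Y^b$, it suffices to check that this composition kills the generators of the defining ideal of $Y^{ab}$, namely the coefficients $A^{ab}_k$ for $k \geqslant 0$. Equivalently, one must check that the image of the series $A^{ab}(z)$ in $(Y^a \otimes Y^b)[[z^{-1}]][z]$ is a polynomial in $z$ (of the expected degree $m+n$).

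Plugging into Theorem \ref{formulecopro}, the image of $A^{ab}(z)$ in $Y^a \otimes Y^b$ is
\[ A^a(z) \otimes A^b(z) + [A^a(z),x^{-}_{0}] \otimes [x^{+}_{0}, A^b(z)]. \]
By definition of $Y^a$ and $Y^b$, the images of $A^a(z)$ and $A^b(z)$ are polynomials in $z$ of degrees $m$ and $n$, with leading coefficients $1$. Hence $A^a(z) \otimes A^b(z)$ is a polynomial in $z$ of degree $m+n$. Moreover, since the top term $z^m$ is central, the commutator $[A^a(z),x^{-}_{0}]$ is a polynomial in $z$ of degree at most $m-1$, and similarly $[x^{+}_{0},A^b(z)]$ has degree at most $n-1$, so the second summand has degree at most $m+n-2$.

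The image of $A^{ab}(z)$ in $Y^a \otimes Y^b$ is therefore a polynomial in $z$ of degree $m+n$, so all its non-polynomial coefficients $A^{ab}_k$ for $k \geqslant 0$ vanish in $Y^a \otimes Y^b$. This proves that $\Delta$ descends to an algebra morphism $Y^{ab} \to Y^a \otimes Y^b$. The only real step was Theorem \ref{formulecopro}; once that formula is in hand, no further obstacle appears, since both sides of the coproduct formula manifestly respect the polynomial truncation.
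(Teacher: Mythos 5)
Your proof is correct and follows essentially the same route as the paper: reduce to showing that the image of $A^{ab}(z)$ in $Y^a\otimes Y^b$ is a polynomial in $z$, then read this off from the formula of Theorem \ref{formulecopro}, since the images of $A^a(z)$, $A^b(z)$ and of the commutators $\co A^a(z),x^-_0\cf$, $\co x^+_0,A^b(z)\cf$ are all polynomials in the truncated quotients. The extra degree bookkeeping you include is harmless but not needed, as only polynomiality (vanishing of the coefficients of strictly negative powers of $z$) is required.
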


\begin{proof}
    We already have :
\[\xymatrix{
    Y \ar[r]^\Delta \ar[d]^{\pi_{ab}} & Y \otimes Y \ar[d]^{\pi_a \otimes \pi_b} \\
    Y^{ab} & Y^a\otimes Y^b
  }\]

where $\pi_a$ (resp. $\pi_b$) is the canonical projection from Yangian $Y$ to its truncation $Y^a$ (resp. $Y^b)$.

We then just have to prove that $\pi_a\otimes \pi_b (\Delta(A^{ab}(z))$ is a polynomial in $z$ with coefficients in $Y^a \otimes Y^b$. 

But, by definition, $\pi_a(A^a(z))$ and $\pi_b(A^b(z))$ are polynomials in $z$, and so are $\pi_a \po \co A^a(z),x^{-}_{0} \cf \pf$ and $\pi_b \po \co x^{+}_{0},A^b(z) \cf \pf$. Then the theorem \ref{formulecopro} concludes.
\end{proof}

As an application of this result, we will consider the representations of $Y$ which come from the irreducible representations of $\ssl_2$ and compute the actions of the $A$-series. Let $\alpha, \beta \in \C$ such that $\alpha - \beta \in \N$. There is a representation $L(\alpha,\beta)$ with  basis $(v_0,v_1,\dots,v_{\alpha-\beta})$ such that \cite[Proposition 2.6]{ChariPressleyYangian}:
\[ h(z)\cdot v_i = \frac{(z+\alpha)(z+\beta-1)}{(z+\beta +i)(z+\beta+i-1)}v_i; \quad x_0^-\cdot v_i = v_{i+1}; \quad x_0^+ \cdot v_i = i(\alpha - \beta - i +1)v_{i-1}.\]
Set $a(z) = (z+\alpha)(z+\beta-1)$. Then the action of the GKLO series $A^a(z)$ is diagonal and polynomial :
$$A^a(z) v_i = \po z+\beta + i\pf v_i.$$

We now compute the action of $A(z)$ on the tensor products of such representations. 
    Let $\alpha_1,\alpha_2,\beta_1,\beta_2 \in \C$ such that $\alpha_1 - \beta_1 \in \N$ and $\alpha_2 - \beta_2 \in \N$. Let $(v_i)_{0\leqslant i \leqslant \alpha_1 - \beta_1}$ and $(w_j)_{0 \leqslant j \leqslant \alpha_2 - \beta_2}$ be the corresponding bases of $L(\alpha_1,\beta_1)$ and $L(\alpha_2,\beta_2)$. Set 
    \[ a_1(z) := (z+\alpha_1)(z+\beta_1-1) ; \quad a_2(z) := (z+\alpha_2)(z+\beta_2-1).\]
    Then $A^{a_1a_2}(z)$ acts on $L(\alpha_1,\beta_1)\otimes L(\alpha_2,\beta_2)$ as follows :
    \[ A^{a_1a_2}(z) \cdot v_i\otimes w_j = (z+\beta_1 +i)(z+\beta_2 + j)v_i\otimes w_j + j(\alpha_2 - \beta_2 - j +1)v_{i+1}\otimes w_{j-1}.
    \]

\section{Quantum affine algebra}
We now study the case of the quantum affine algebra. The ideas are similar, with some slight changes : a modified definition of truncation and the use of $q$-exponential.

Let $q \in \C^\times$ which is not a root of unity.

\begin{definition}
    The \textbf{quantum affine algebra} $U_q$ is the associative algebra defined by generators $x^\pm_{m}$, $\phi^\pm_m$ for $m \in \Z$ and relations for $\epsilon \in \{\pm 1\},n,m \in \Z$ :
    \begin{gather*}
        \phi^+_m = 0 \text{ if } m < 0; \quad \phi_m^- = 0 \text{ if } m > 0; \quad \phi_0^+\phi_0^- = 1 ; \\
        \co \phi^\pm_m, \phi^\epsilon_n\cf = 0; \quad \co x^+_m, x^-_n \cf = \frac{\phi^+_{m+n} - \phi^-_{m+n}}{q - q^{-1}}; \\
        \phi^\epsilon_{m+1}x_n^\pm - q^{\pm2} \phi^\epsilon_m x^\pm_{n+1} = q^{\pm2}x_n^\pm\phi^\epsilon_{m+1} -  x^\pm_{n+1} \phi_m^\epsilon; \\
        x^\pm_{m+1}x_n^\pm - q^{\pm2} x^\pm_m x^\pm_{n+1} = q^{\pm2}x_n^\pm x^\pm_{m+1} - x_m^\pm x^\pm_{n+1}.
    \end{gather*}
\end{definition}

Define the generating series :
\[ x^\pm(z) := \sum\limits_{n\in\Z} x^\pm_nz^{n}, \quad \phi^\pm(z) := \sum\limits_{n\in\Z} \phi^\pm_nz^{n}.\]

Define the Drinfeld-Cartan elements $h_n$ for $n\in \Z \setminus \{0\}$  by :
\[ \phi^\pm(z) = \phi^\pm_0\exp \po \pm(q-q^{-1})\sum\limits_{\pm s > 0} h_s z^s \pf.\]

The quantum affine algebra is equipped with a Hopf algebra structure whose coproduct is determined by :
\begin{align*}
    \Delta(x_0^\pm) &= x_0^\pm \otimes 1 + 1 \otimes x^\pm_0; \quad \Delta(\phi^\pm_0) = \phi^\pm_0 \otimes \phi^\pm_0; \\
    \Delta(h_{-1}) &= 1 \otimes h_{-1} + h_{-1} \otimes 1 + (q^2 -q^{-2})x_0^- \otimes x_{-1}^+ ; \\
    \Delta(h_1) &= 1 \otimes h_1 + h_1 \otimes 1 - (q^{2}-q^{-2})x_1^- \otimes x_0^+.
\end{align*}

\begin{definition}\cite[Proposition 5.5]{FrenkelHernandez},\cite[9.2]{Hernandez}
    \[ T^\pm(z) := \exp \po \pm(q-q^{-1}) \sum\limits_{\pm s > 0}  \frac{1}{q^{2s} - q^{-2s}} h_s z^s \pf.\]
\end{definition}

\begin{remark}
    The series $T^-(z)$ was introduced in \cite{FrenkelHernandez}, then $T^+(z)$ in \cite{Hernandez}. The convention used here is different and is introduced in \cite{ZhangHernandez}.
\end{remark}

The coproduct of the $T$-series has been studied in \cite{Zhang}. To write it properly, we need the definition of $q$-exponential. Define :
\[ (n)_q := \frac{q^{2n}-1}{q^2-1}; \quad (n)_q! := \prod\limits_{k= 1}^n (k)_q.\]
Let $x$ an element of an associative algebra $B$. For $z$ formal variable, define the $q$-exponential by :
    \[ \exp_q(zx) := \sum\limits_{n \geqslant 0} \frac{1}{(n)_q!}x^n z^n \in B[[z^{}]].\]

\begin{proposition}\cite[Example 9.6]{Zhang}\label{coproduitT}
    
\begin{align*}
    \Delta(T^-(z)) &= \po 1\otimes T^-(z) \pf\exp_q\po(q-q^{-1}) x^{-}_{0} \otimes x^{+}_{-1} z^{-1} \pf \po T^-(z) \otimes 1 \pf
\end{align*}

\end{proposition}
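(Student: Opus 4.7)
The plan is to mimic the proof of Proposition \ref{coproduitS} in the Yangian setting: characterize $T^-(z)$ by a $q$-difference equation and apply $\Delta$ to it. The reason this should work is that $T^-(z)$, being an exponential of a series in the pairwise commuting generators $h_{-s}$, is uniquely determined within the commutative subalgebra $\langle h_{-s}\rangle$ by any equation relating it to $\phi^-(z)$ with a suitable normalization.

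First, I would establish the characterizing equation. A direct manipulation of exponents, based on the identity
\[\frac{q^{2s}-q^{-2s}}{q^{2s}-q^{-2s}} = 1,\]
applied term by term to the difference $\log T^-(q^{-2}z) - \log T^-(q^2 z)$, gives
\[T^-(q^{-2}z)\,\phi^-(z) \;=\; \phi^-_0\, T^-(q^2 z).\]
Together with the normalization $T^-(z)\in 1+z^{-1}U_q[[z^{-1}]]$, this characterizes $T^-(z)$ uniquely.

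Second, I would derive the quantum analog of Proposition \ref{permutation} and compute $\Delta(\phi^-(z))$. Starting from the given $\Delta(h_{-1}) = 1\otimes h_{-1}+h_{-1}\otimes 1+(q^2-q^{-2})x^-_0\otimes x^+_{-1}$ and iterating the defining relations $\phi^-_{m+1}x^\pm_n - q^{\pm2}\phi^-_m x^\pm_{n+1}=q^{\pm2}x^\pm_n\phi^-_{m+1}-x^\pm_{n+1}\phi^-_m$, one inductively determines $\Delta(\phi^-_m)$ for all $m\leq 0$ and hence $\Delta(\phi^-(z))$; schematically it takes the form $\phi^-(z)\otimes\phi^-(z)+(\text{cross terms in }x^-\otimes x^+)$. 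The same relations yield closed-form commutation identities between $T^-(z)$ and $x^-_0$, $x^+_{-1}$, involving multiplicative shifts of $z$ by $q^{\pm 2}$.

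Third, denoting by $R(z)$ the proposed right-hand side, I would verify that $R(z)$ satisfies the equation obtained by applying $\Delta$ to the characterizing equation, namely
\[R(q^{-2}z)\,\Delta(\phi^-(z)) \;=\; \Delta(\phi^-_0)\, R(q^2 z).\]
This reduces to moving the $x^-_0\otimes x^+_{-1}$ factors in the $q$-exponential past the $T^-$ factors using the quantum analog of Proposition \ref{permutation}, and collecting terms via a $q$-binomial identity. Uniqueness of solutions to the $q$-difference equation then gives $\Delta(T^-(z))=R(z)$.

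The main obstacle will be the third step. The commutation relations between $T^-(z)$ and $x^\pm$ introduce the $q^{\pm 2}$-shifts of the spectral parameter, and tracking how these shifts accumulate through each order of the series expansion of $\exp_q$ is delicate. It is precisely this interaction that forces the appearance of the $q$-exponential rather than the ordinary exponential: a $q$-deformed version of the combinatorial lemma of section 2.1, in which the scalar $c$ is replaced by a multiplicative shift, is the key ingredient that should make the bookkeeping close.
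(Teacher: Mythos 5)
The paper does not actually prove this proposition; it is imported verbatim from \cite[Example 9.6]{Zhang}, so there is no internal argument to compare yours against. That said, your strategy is the natural one and matches how such formulas are established in the cited source: characterize $T^-(z)$ by a $q$-difference equation, apply $\Delta$, and identify $\Delta(T^-(z))$ with the proposed right-hand side $R(z)$ by uniqueness. Your characterizing equation is correct: from $\phi^-(z)=\phi_0^-\exp\po-(q-q^{-1})\sum_{s<0}h_sz^s\pf$ and the definition of $T^-$ one indeed gets $T^-(q^{-2}z)\,\phi^-(z)=\phi_0^-\,T^-(q^2z)$, and since $q$ is not a root of unity the coefficient $q^{2k}-q^{-2k}$ produced at order $z^{-k}$ is nonzero, so the equation together with the normalization determines $T^-(z)$; the same recursion applies to the image equation in $U_q\otimes U_q$ because $\Delta(\phi_0^-)=\phi_0^-\otimes\phi_0^-$ is invertible and commutes with the weight-zero coefficients of $R(z)$. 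So steps one and the uniqueness mechanism are sound.

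The gap is that all of the content sits in your third step, which you describe but do not carry out. Two concrete points. First, $\Delta(\phi^-(z))$ is not ``$\phi^-(z)\otimes\phi^-(z)$ plus a cross term'': exactly as with the Yangian formula for $\Delta(h(z))$ recalled in Section 1, it is an infinite sum over all powers $(x^-)^k\otimes(x^+)^k$, and the identity $R(q^{-2}z)\,\Delta(\phi^-(z))=\Delta(\phi_0^-)\,R(q^2z)$ must be matched in every such weight component; without an explicit closed form for $\Delta(\phi^-(z))$ the verification cannot even be set up. Second, the commutation of $T^-(z)^{\pm1}$ with $x_0^-$ and $x_{-1}^+$ does not produce a clean multiplicative shift of $z$ but the geometric-series identities of Proposition \ref{interversionquantique}; combining these with the $\exp_q$ factor and closing the computation requires precisely the $q$-deformed analogue of the binomial lemma of Section 2.1 that you invoke as ``the key ingredient'' without stating or proving it. Until that lemma is formulated and the order-by-order verification is actually performed, what you have is a credible plan rather than a proof.
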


\begin{definition}\cite{Finkeliuk}\label{troncatureaffine}
    Let $n \in \N$ and $a(z)$ a complex polynomial with dominant term $a_{2n}z^{2n}$ and non-zero constant term $a_0$. Define the GKLO series $\A^{a,+}(z)$ and $\A^{a,-}(z)$ power series of constant term $1$ in $z$ and $z^{-1}$ respectively by :
    $$ \phi^+(z) = \frac{\phi_0^+a(z)a_0^{-1}}{\A^{a+}(a)\A^{a+}(zq^2)}; \quad
    \phi^-(z) = \frac{\phi_0^-a(z)a_{2n}^{-1}z^{-2n}}{\A^{a-}(z)\A^{a-}(zq^2)}.$$

    One can write
    \[ \A^{a\pm}(z) = \sum\limits_{k \in \Z}\A_k^{a,\pm} z^{k}\]
    so that $\A_k^{a,+} = 0 = \A_{-k}^{a,-}$ if $k < 0$ and $\A_0^{a,\pm} = 1$. 
    The truncated quantum affine algebra $U_q^a$ is defined as the quotient algebra of $U_q$ by the following relations for  $k \in \Z$ :
    \begin{gather}
        \A^{a+}_k = 0 \text{ for } k > n; \quad \A^{a-}_k = \A^{a-}_{-n}\A^{a+}_{k+n} \text{ for } k \in \Z; \\
        a_{2n}(\phi_0^+)^2 = a_0(\A^{a,+}_{n})^2q^{2n}.    
    \end{gather}
\end{definition}

\begin{remark}
    This definition follows the convention in \cite[(3.20-3.23)]{ZhangHernandez}, which is a simplification of the definition from \cite[Definition 8.12]{Finkeliuk}. The inverse of the series $\A^{1,-}(z)$ first appeared in \cite[(3.11)]{Frenkikhin}.

\end{remark}

As for the Yangian, getting a formula for $A^\pm(z) := \A^{1,\pm}(z)$ is enough to deduce one for $\A^{a\pm}(z)$. We shall need the following relations.
\begin{proposition}\cite[Proposition 3.2]{ZhangHernandez}\label{interversionquantique}   
    We have :
    $$ A^+(z) = \frac{T^+(zq^{-2})}{T^+(z)}; \quad 
    A^-(z) = \frac{T^-(zq^{-2})}{T^-(z)}.$$
    Moreover for all $n \in \Z$ :
    \begin{gather}
        T^-(z)^{-1} x_n^- = \sum\limits_{k\geqslant 0} x_{n-k}^-z^{-k}T^{-}(z)^{-1}; \quad x_{n}^+T^-(z)^{-1} = T^-(z)^{-1} \sum\limits_{l \geqslant 0} x_{n-l}^+z^{-l}; \\
        \co A^-(z),x^{-}_{n} \cf = (1-q^2)\sum\limits_{k \geqslant 1} x^-_{n-k}z^{-k}A^-(z) \\
        \co x_n^+,A^-(z) \cf = (1-q^2)A^-(z)\sum\limits_{k \geqslant 1} x_{n-k}^+z^{-k}.
    \end{gather}
\end{proposition}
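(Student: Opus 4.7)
The plan is to establish the proposition in three linked stages, all resting on the standard commutation relation
\[ \co h_s, x_n^\pm \cf = \pm \frac{q^{2s}-q^{-2s}}{s(q-q^{-1})} x_{n+s}^\pm, \]
which is itself derived from the defining relations between $\phi^\epsilon_m$ and $x_n^\pm$ by expanding $\phi^\pm(z)$ in terms of the $h_s$. For the ratio formulas $A^\pm(z) = T^\pm(zq^{-2})/T^\pm(z)$, by the definition of $A^\pm$ via $\phi^\pm$ it is enough to check that the candidate has constant term $1$ and satisfies the defining factorisation $A^\pm(z) A^\pm(zq^2) = \phi_0^\pm/\phi^\pm(z)$. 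Rewriting the telescoping product $T^\pm(zq^{-2})/T^\pm(zq^2)$ as a single exponential, the coefficient of $h_s z^s$ (for $\pm s > 0$) collapses to $\mp (q-q^{-1})$, which matches $\log\po\phi_0^\pm/\phi^\pm(z)\pf$; uniqueness in the definition of $A^\pm$ then closes this stage.

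For the conjugation identities, set $\mathcal{H}(z) := \log T^-(z)$ and substitute the relation above to obtain
\[ \co \mathcal{H}(z), x_n^+ \cf = \sum_{k\geq 1} \frac{z^{-k}}{k} x_{n-k}^+, \quad \co \mathcal{H}(z), x_n^- \cf = -\sum_{k\geq 1} \frac{z^{-k}}{k} x_{n-k}^-. \]
Let $\tau$ denote the index-shift $x_m^\pm \mapsto x_{m-1}^\pm$ on the $\C[[z^{-1}]]$-module spanned by $\{x_m^\pm\}$. The right-hand sides above are then the action of $\pm \log(1-z^{-1}\tau)$ on the respective generators, and choosing the correct direction of conjugation, exponentiation converts this logarithm into the geometric series $(1-z^{-1}\tau)^{-1}$. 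This yields $T^-(z) x_n^+ T^-(z)^{-1} = \sum_{l\geq 0} z^{-l} x_{n-l}^+$ and $T^-(z)^{-1} x_n^- T^-(z) = \sum_{k\geq 0} z^{-k} x_{n-k}^-$, namely the two formulas of the proposition. The opposite conjugations come for free: they produce the ``polynomial siblings'' $T^-(w) x_m^- T^-(w)^{-1} = x_m^- - w^{-1} x_{m-1}^-$ and $T^-(w)^{-1} x_m^+ T^-(w) = x_m^+ - w^{-1} x_{m-1}^+$, used below.

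Finally, for the commutators with $A^-(z) = T^-(zq^{-2}) T^-(z)^{-1}$, the four conjugation formulas of Stage~2 let one move $x_n^\pm$ past both factors of $A^-(z)$ in turn. For instance, after using $T^-(z)^{-1}x_n^- = \sum_{k\geq 0} z^{-k}x_{n-k}^- T^-(z)^{-1}$ and then the polynomial sibling with $w = zq^{-2}$, the outcome simplifies (after reindexing one sum by $k\mapsto k-1$) to $A^-(z) x_n^- = x_n^- A^-(z) + (1-q^2) \sum_{k\geq 1} z^{-k} x_{n-k}^- A^-(z)$, which is the stated commutator; the formula for $\co x_n^+, A^-(z) \cf$ follows in exactly the same fashion. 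The main obstacle is Stage~2: one must match the formal identity $-\sum_{k\geq 1} u^k/k = \log(1-u)$ with the correct sign and direction of the shift $\tau$, since exponentiating in the wrong direction produces the polynomial $(1-z^{-1}\tau)$ rather than the needed rational $(1-z^{-1}\tau)^{-1}$. Once that operator-theoretic dictionary is in place, the rest is a careful bookkeeping of $q$-shifts.
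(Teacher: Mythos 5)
The paper offers no argument for this statement—it is imported wholesale as \cite[Proposition 3.2]{ZhangHernandez}—so there is no internal proof to compare against; what you have written is a genuine, essentially complete derivation from the Drinfeld presentation. Your three stages are sound: the relation $[h_s,x_n^\pm]=\pm\frac{q^{2s}-q^{-2s}}{s(q-q^{-1})}x_{n+s}^\pm$ is the standard consequence of the $\phi$--$x$ relations (you assert it rather than derive it, which is acceptable but is the one place where the proof leans on an external computation); the telescoping $T^\pm(zq^{-2})/T^\pm(zq^{2})=\phi_0^\pm/\phi^\pm(z)$ together with uniqueness of $A^\pm$ correctly establishes the ratio formulas (uniqueness needs $1+q^{2k}\neq 0$, guaranteed since $q$ is not a root of unity); and the $\operatorname{ad}$-exponentiation via the shift operator $\tau$, with $\exp(-\log(1-z^{-1}\tau))=(1-z^{-1}\tau)^{-1}$ in one direction and the "polynomial sibling" $1-z^{-1}\tau$ in the other, is exactly the right mechanism and converges in the $z^{-1}$-adic topology because $\log T^-(z)\in z^{-1}U_q[[z^{-1}]]$. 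The telescoping in Stage 3 then gives $[A^-(z),x_n^-]=(1-q^2)\sum_{k\geqslant 1}x^-_{n-k}z^{-k}A^-(z)$ and $[x_n^+,A^-(z)]=(1-q^2)A^-(z)\sum_{k\geqslant 1}x^+_{n-k}z^{-k}$ as you claim.

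One discrepancy to flag: the statement as printed has $\sum_{k\leqslant 1}$ in the last formula, whereas you obtain $\sum_{k\geqslant 1}$. Your version is the correct one—the bound $k\leqslant 1$ would produce unbounded positive powers of $z$, and the way the identity is invoked in Corollary \ref{twoformulas} and in the proof of Theorem \ref{coproduitaffinemoins} (where $[x_0^+,A^-(z)]$ must equal $(1-q^2)A^-(z)\sum_{l\geqslant 0}x^+_{-l-1}z^{-l-1}$) confirms that $k\geqslant 1$ is intended. So you have proved the statement the paper actually uses; the printed inequality is a typo, not a gap in your argument.
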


\begin{corollaire}\label{twoformulas}
    We have :
    \[ \co A^-(z),x_n^- \cf = z^{-1}\co A^-(z),x_{n-1}^- \cf_{q^2}; \quad \co x_n^+, A^-(z)  \cf = z^{-1} \co x_{n-1}^+,A^-(z) \cf_{q^2} \]
    where $\co x,y \cf_{q^2} = xy - q^2yx$ for all $x,y \in U_q$.
\end{corollaire}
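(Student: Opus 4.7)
The plan is to prove both identities by direct substitution using the commutator formulas from Proposition \ref{interversionquantique}; no deeper structural argument is needed. I will only write the first identity in detail since the second is strictly parallel.

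First, I would expand the $q$-commutator on the right hand side as
\[ z^{-1}\co A^-(z),x_{n-1}^- \cf_{q^2} = z^{-1} A^-(z) x_{n-1}^- - q^2 z^{-1} x_{n-1}^- A^-(z). \]
Then, using Proposition \ref{interversionquantique} to rewrite $A^-(z) x_{n-1}^-$ as
\[ A^-(z) x_{n-1}^- = x_{n-1}^- A^-(z) + (1-q^2)\sum_{k\geqslant 1} x_{n-1-k}^- z^{-k} A^-(z), \]
the previous expression becomes
\[ (1-q^2) z^{-1} x_{n-1}^- A^-(z) + (1-q^2)\sum_{k\geqslant 1} x_{n-1-k}^- z^{-k-1} A^-(z) = (1-q^2)\sum_{k\geqslant 0} x_{n-1-k}^- z^{-k-1} A^-(z). \]
A reindexing $j = k+1$ turns the right hand side into $(1-q^2)\sum_{j\geqslant 1} x_{n-j}^- z^{-j} A^-(z)$, which by Proposition \ref{interversionquantique} is precisely $\co A^-(z),x_n^- \cf$. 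This establishes the first identity.

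The second identity is proved in the same way, starting from the fourth relation of Proposition \ref{interversionquantique}, $\co x_n^+, A^-(z) \cf = (1-q^2) A^-(z) \sum_{k\geqslant 1} x_{n-k}^+ z^{-k}$, expanding $\co x_{n-1}^+, A^-(z) \cf_{q^2} = x_{n-1}^+ A^-(z) - q^2 A^-(z) x_{n-1}^+$, and rewriting $A^-(z) x_{n-1}^+$ via the same proposition. Since these are routine term-by-term manipulations, I do not anticipate any real obstacle; the only thing to be careful about is keeping track of the index shift when reindexing the summation, as this is precisely what converts a sum starting at $k=1$ into a sum starting at $k=0$.
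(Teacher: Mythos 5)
Your proof is correct and follows essentially the same route as the paper's: both reduce the claim to relations (3) and (4) of Proposition \ref{interversionquantique}, absorb the extra $(1-q^2)x_{n-1}^-A^-(z)$ term coming from the $q^2$-commutator into the sum as the $k=0$ term, and reindex. The only cosmetic difference is direction (the paper shows $\co A^-(z),x_n^-\cf_{q^2}=z\co A^-(z),x_{n+1}^-\cf$ while you start from the right-hand side), and note that your sum $\sum_{k\geqslant 1}$ in the fourth relation is the intended reading of the paper's $\sum_{k\leqslant 1}$, which appears to be a typo.
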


\begin{proof}
    Let us prove the result for $x^-_n$. According to (3) in proposition \ref{interversionquantique} :
    \begin{align*} \co A^-(z), x_n^- \cf_{q^2} &= (1-q^2)\sum\limits_{k \geqslant 0} x_{n-k}^-z^{-k} A^-(z) \\
    &= z(1-q^2)\sum\limits_{k\geqslant 1} x_{n+1-k} z^{-k} A^-(z) \\
    &= z\co A^-(z), x_{n+1}^- \cf
    \end{align*}
    last equality coming from (4) of the same proposition.
\end{proof}

\section{Formulas of coproducts for $U_q$}
As we have seen in proposition \ref{coproduitT}, the coproducts of $T$-series depend on $q$-exponential. Then let us recall two properties that will be useful.

\begin{proposition}\cite[(IV.2.4,IV.2.6)]{Kassel}\label{qexpo}\.
    Let $B$ be an associative algebra. Let $a,b,c \in B$ such that $ab = q^2ba$. Then 
    \begin{gather} \exp_q(z(a+b)) = \exp_q(zb)\exp_q(za); \quad \exp_q(zc) = \exp_{q^{-1}}(-zc); \\
    \frac{\exp_q(zc)}{\exp_q(q^{-2}zc)} = 1 + (1-q^{-2})zc
    \end{gather}
\end{proposition}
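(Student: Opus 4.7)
The plan is to verify all three identities by reducing them to equalities of formal power series in $z$ and comparing coefficients of monomials, using the definition $\exp_q(zx) = \sum_{n\geq 0} \frac{z^n x^n}{(n)_q!}$ that was stated just before the proposition.

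The core of the statement is the first identity, which is a noncommutative $q$-analogue of the functional equation of the exponential. I would first establish a noncommutative $q$-binomial theorem: under the hypothesis $ab = q^2 ba$, for every $n \in \N$,
\[ (a+b)^n = \sum_{k=0}^n \binom{n}{k}_q b^{n-k} a^k, \qquad \binom{n}{k}_q := \frac{(n)_q!}{(k)_q!\,(n-k)_q!}. \]
The proof is by induction on $n$, using the iterated commutation $a\, b^{n-k} = q^{2(n-k)} b^{n-k}\, a$ together with a $q$-Pascal identity of the shape $\binom{n+1}{k}_q = q^{2k}\binom{n}{k}_q + \binom{n}{k-1}_q$. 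This is the exact $q$-deformation of the elementary binomial step already used in Section 2.1, and the pattern of that argument adapts directly. Dividing by $(n)_q!$ and summing over $n$ identifies $\exp_q(z(a+b))$ with the Cauchy product $\exp_q(zb)\exp_q(za)$.

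The second identity $\exp_q(zc) = \exp_{q^{-1}}(-zc)$ involves only one element, so no commutation intervenes and it reduces to a numerical identity between the coefficients $1/(n)_q!$ and $(-1)^n/(n)_{q^{-1}}!$, which I would derive from the elementary relation $(k)_{q^{-1}} = q^{-2(k-1)}(k)_q$ and a telescoping product. For the third identity, I would rewrite it in the equivalent form $\exp_q(zc) = \bigl(1 + (1-q^{-2})zc\bigr)\exp_q(q^{-2}zc)$ and compare coefficients of $z^n c^n$. This reduces to the single-variable recurrence
\[ \frac{1}{(n)_q!} = \frac{q^{-2n}}{(n)_q!} + (1-q^{-2})\frac{q^{-2(n-1)}}{(n-1)_q!}, \]
which follows directly from $(n)_q = \frac{q^{2n}-1}{q^2-1}$ after clearing denominators.

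The one genuinely nontrivial step is the noncommutative $q$-binomial theorem underpinning the first identity; the other two are pure coefficient checks. Since these identities are classical and attributed to Kassel, the whole proof amounts to careful bookkeeping once the commutation $ab = q^2 ba$ is used systematically to move powers of $a$ past powers of $b$ with the appropriate $q$-weights.
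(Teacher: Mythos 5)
The paper gives no proof of this proposition — it is quoted from Kassel — so the only question is whether your argument stands on its own. Your treatment of the first identity (the noncommutative $q$-binomial theorem with the Pascal step $\binom{n+1}{k}_q = q^{2k}\binom{n}{k}_q + \binom{n}{k-1}_q$, then division by $(n)_q!$ and a Cauchy product) is correct, as is your verification of the third identity: the recurrence $\frac{1}{(n)_q!} = \frac{q^{-2n}}{(n)_q!} + (1-q^{-2})\frac{q^{-2(n-1)}}{(n-1)_q!}$ does reduce, after multiplying through by $(n)_q!$, to $1 = q^{-2n} + q^{-2n}(q^{2n}-1)$.

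The gap is in the second identity. As printed, $\exp_q(zc) = \exp_{q^{-1}}(-zc)$ is false: the coefficient of $zc$ is $1/(1)_q! = 1$ on the left and $-1/(1)_{q^{-1}}! = -1$ on the right, so the "numerical identity between $1/(n)_q!$ and $(-1)^n/(n)_{q^{-1}}!$" that you propose to derive already fails at $n=1$. Indeed your own relation $(k)_{q^{-1}} = q^{-2(k-1)}(k)_q$ gives $(n)_{q^{-1}}! = q^{-n(n-1)}(n)_q!$, so the right-hand coefficient is $(-1)^n q^{n(n-1)}/(n)_q!$, not $1/(n)_q!$. The statement intended here — and the one the paper actually uses later, in the step $\exp_{q^{-1}}(Y)\exp_q(-Y)=1$ of the computation of $\Delta(A^-(z))$ — is the inverse relation $\exp_q(zc)\,\exp_{q^{-1}}(-zc) = 1$; the display is missing an inverse. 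Proving that is not a telescoping product: the coefficient of $z^nc^n$ in the product is $\frac{1}{(n)_q!}\sum_{j=0}^{n}(-1)^j q^{j(j-1)}\binom{n}{j}_q$, and its vanishing for $n\geqslant 1$ is the specialization $t=-1$ of the Gaussian identity $\prod_{i=0}^{n-1}\bigl(1+q^{2i}t\bigr) = \sum_{j} q^{j(j-1)}\binom{n}{j}_q t^j$, in which the factor $i=0$ vanishes. You should flag the misprint and supply this argument (or deduce the inverse relation from your first identity applied in a two-variable polynomial algebra).
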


Let us start by computing $\Delta(A^-(z))$. We will then use this result to determine the coprodut of $A^+(z)$. 

To do so, we could do exactly the same calculations as we did for Yangian. However, in the case of the quantum affine algebra, we can use the properties of $q$-exponential to simplify computations.

\begin{proposition}
    \[ \Delta\po A^-(z) \pf = \po T^-(z)^{-1} \otimes T^-(zq^{-2}) \pf \po 1 + (1-q^2)^2q^{-1}z^{-1}x_0^-\otimes x_{-1}^+ \pf \po T^-(zq^{-2}) \otimes T^-(z)^{-1} \pf. \]    
\end{proposition}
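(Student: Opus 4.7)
The plan is to mirror the strategy of Proposition \ref{premform} in the Yangian setting, with the series $T^-$ playing the role of $S$ and the $q$-exponential identities of Proposition \ref{qexpo} replacing the binomial identity used there. The key structural observation is that the polynomial middle factor $1 + (1-q^2)^2 q^{-1} z^{-1} x_0^- \otimes x_{-1}^+$ of the target should come from the $q$-exponential ratio identity in Proposition \ref{qexpo}(6), since $(q^2-1)(q-q^{-1}) = (1-q^2)^2 q^{-1}$.

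I would first use Proposition \ref{interversionquantique} to write $A^-(z) = T^-(z)^{-1}T^-(zq^{-2})$; since the coefficients of $T^-$ commute, this gives $\Delta(A^-(z)) = \Delta(T^-(z))^{-1}\Delta(T^-(zq^{-2}))$. Substituting the coproduct formulas of Proposition \ref{coproduitT} and setting $Y = (q-q^{-1})x_0^- \otimes x_{-1}^+$ yields
\[\Delta(A^-(z)) = (T^-(z)^{-1} \otimes 1)\,\exp_q(z^{-1}Y)^{-1}\,(1 \otimes A^-(z))\,\exp_q(q^2 z^{-1}Y)\,(T^-(zq^{-2}) \otimes 1).\]
I would then split $(1 \otimes A^-(z)) = (1 \otimes T^-(zq^{-2}))(1 \otimes T^-(z)^{-1})$ and transport $(1 \otimes T^-(zq^{-2}))$ to the left of $\exp_q(z^{-1}Y)^{-1}$ and $(1 \otimes T^-(z)^{-1})$ to the right of $\exp_q(q^2 z^{-1}Y)$, tracking how $x_{-1}^+$ gets twisted by means of Proposition \ref{interversionquantique}(2),(4) and Corollary \ref{twoformulas}.

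Once the $T^-$-factors sit on the outside, the two remaining $q$-exponentials should have arguments related by the scaling $c \leftrightarrow q^{-2}c$ required to apply Proposition \ref{qexpo}(6), collapsing their product to $1 + (q^2-1)z^{-1}Y$. Expanding $Y$ produces the polynomial middle factor of the statement, and recombining with the outer factors $(T^-(z)^{-1} \otimes T^-(zq^{-2}))$ and $(T^-(zq^{-2}) \otimes T^-(z)^{-1})$ gives the announced formula.

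The main obstacle is the intermediate step of pulling the $T^-$-factors through the $q$-exponentials. In the Yangian case of Proposition \ref{premform}, the two-term commutations $S(z)x_0^\pm = (x_0^\pm - x_1^\pm z^{-1})S(z)$ from Proposition \ref{permutation} make the bookkeeping finite; here, the corresponding formulas of Proposition \ref{interversionquantique} involve genuinely infinite sums. One must check that the infinite corrections introduced by conjugating each exponential combine, across both factors, so as to leave the clean $q^2$-rescaling of the exponential arguments that Proposition \ref{qexpo}(6) demands; this plays the role of the binomial identity used in the Yangian argument.
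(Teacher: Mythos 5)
Your strategy is the same as the paper's: write $A^-(z)=T^-(z)^{-1}T^-(zq^{-2})$, expand the coproduct with Proposition \ref{coproduitT}, push the $T^-$-factors to the outside, and collapse the remaining $q$-exponentials. Your first displayed formula is correct and is exactly the paper's starting point, and your final middle factor $1+(q^2-1)z^{-1}Y$ does agree with the statement. But the step you explicitly defer --- checking that the corrections produced by conjugation ``combine \dots so as to leave the clean $q^2$-rescaling of the exponential arguments'' --- is the entire content of the proof, and the mechanism you anticipate is not the one that operates, so as written there is a genuine gap.

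Two concrete points. First, the conjugations you need are not infinite: Proposition \ref{interversionquantique} records the infinite-sum direction, but moving $1\otimes T^-(zq^{-2})$ to the left and $1\otimes T^-(z)^{-1}$ to the right uses the \emph{inverse} conjugation, which is the finite two-term twist $x^+_{-1}\mapsto x^+_{-1}-x^+_{-2}w^{-1}$ (with $w=zq^{-2}$, resp.\ $w=z$). Second, after these twists the two exponentials are $\exp_{q^{-1}}(X+Y)$ and $\exp_q(-q^2X-Y)$ with $X=(q^{-1}-q)z^{-1}x^-_0\otimes x^+_{-1}$ and $Y=(q-q^{-1})q^2z^{-2}x^-_0\otimes x^+_{-2}$; their arguments are \emph{not} related by a $q^{\pm2}$ scaling, so the ratio identity of Proposition \ref{qexpo} does not apply directly. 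The missing idea is to use $XY=q^2YX$ (coming from $x^+_{-1}x^+_{-2}=q^2x^+_{-2}x^+_{-1}$) together with the factorization $\exp_q(z(a+b))=\exp_q(zb)\exp_q(za)$ to split each exponential into an $X$-part and a $Y$-part; the $Y$-parts cancel pairwise, and only then does the ratio identity apply to the surviving $X$-parts, yielding $1+(1-q^2)X$, which is the stated middle factor. This factorization-and-cancellation plays the role of the binomial identity in Proposition \ref{premform}; with it supplied, your outline becomes the paper's proof.
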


\begin{proof}
Combining proposition \ref{interversionquantique}
 and proposition \ref{coproduitT}, we have : 
 \begin{align*} \Delta(A^-(z)) &=  \sum\limits_{k,l \geqslant 0}  T^-(z)^{-1}\frac{\po q^{-1}-q\pf^k}{\po k\pf_{q^{-1}}!} z^{-k} (x^{-}_{0})^k \frac{\po q - q^{-1} \pf^l}{\po l\pf_{q}!} z^{-l} q^{2l} (x^{-}_{0})^l T^-(zq^{-2})  \\ 
  & \quad \otimes (x^{+}_{-1})^k T^-(z)^{-1} T^-(zq^{-2})(x^{+}_{-1})^l \\
&= \po T^-(z)^{-1} \otimes T^-(zq^{-2}) \pf \exp_{q^{-1}}(X + Y)\exp_{q}(-q^2X - Y) \po T^-(zq^{-2})\otimes T^-(z)^{-1} \pf
\end{align*}
where $X := (q^{-1} - q)z^{-1}x^{-}_{0}\otimes x^{+}_{-1}$ and $Y := (q-q^{-1})z^{-2}q^2x^{-}_{0}\otimes x^+_{-2}$. Since $x^+_{-1} x_{-2}^+ = q^2x_{-2}^+x_{-1}^+$, we have $XY = q^2YX$. We simplify the right-hand side using proposition \ref{qexpo} :
    \begin{align*} \exp_{q^{-1}}(X+Y)\exp_q(-q^2X-Y) &= \exp_{q^{-1}}(X)\exp_{q^{-1}}(Y)\exp_{q}(-Y)\exp_q(-q^{2}X) \\
    &= \frac{\exp_{q^{-1}}(X)}{\exp_{q^{-1}}(q^2X)} = 1 + (1-q^2)X.
    \end{align*}
\end{proof}

\begin{theoreme}\label{coproduitaffinemoins}
    Let $a(z)$ and $b(z)$ complex polynomials of non-zero constant terms. Then :
    \begin{gather*}  \Delta(\A^{ab-}(z)) = \A^{a-}(z)\otimes \A^{b-}(z) + q^{-1}z \co \A^{a-}(z),x_{1}^- \cf \otimes \co x^+_{0},\A^{b-}(z) \cf; \\
    \Delta(\A^{ab+}(z)) = \A^{a+}(z)\otimes \A^{b+}(z) + qz^{-1} \co \A^{a+}(z),x_{0}^- \cf \otimes \co x^+_{-1},\A^{b+}(z) \cf. 
    \end{gather*}
\end{theoreme}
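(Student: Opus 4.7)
The strategy parallels the Yangian theorem \ref{formulecopro}. First, reduce to $a = b = 1$. Given a polynomial $a(z)$ of degree $2n$ with non-zero constant term, the renormalization $a(z) a_{2n}^{-1} z^{-2n}$ lies in $1 + z^{-1}\C[[z^{-1}]]$. Passing to logarithms, the factorization problem $f(z) = g(z) g(zq^2)$ becomes the diagonal linear system $(1 + q^{-2k}) G_k = F_k$ for the log-coefficients, which has a unique solution since $q$ is not a root of unity. Hence there exists a unique $a^{*-}(z) \in 1 + z^{-1}\C[[z^{-1}]]$ with $a(z) a_{2n}^{-1} z^{-2n} = a^{*-}(z) a^{*-}(zq^2)$, the assignment $a \mapsto a^{*-}$ is multiplicative, and comparing the defining equations of $\A^{a-}$ and of $A^- := \A^{1,-}$ yields $\A^{a-}(z) = a^{*-}(z) A^-(z)$. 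Since $a^{*-}(z)$ is a scalar series, the $ab$-formula follows at once from the case $a = b = 1$.

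For that case, I expand the preceding proposition. The $h_n$ commute, so $T^-(z)^{-1} T^-(zq^{-2}) = A^-(z) = T^-(zq^{-2}) T^-(z)^{-1}$, and by (1)--(2) of proposition \ref{interversionquantique}
\[ T^-(z)^{-1} x_0^- T^-(zq^{-2}) = \sum_{k \geqslant 0} x_{-k}^- z^{-k} A^-(z), \qquad T^-(zq^{-2}) x_{-1}^+ T^-(z)^{-1} = A^-(z) \sum_{l \geqslant 0} x_{-1-l}^+ z^{-l}. \]
To collapse these sums into honest commutators, apply corollary \ref{twoformulas} at $n = 1$ and at $n = 0$ together with (3)--(4) of proposition \ref{interversionquantique}: one computes $\co A^-(z), x_0^- \cf_{q^2} = (1-q^2) \sum_{k \geqslant 0} x_{-k}^- z^{-k} A^-(z)$ and symmetrically for the $x^+$ side, so that
\[ \sum_{k \geqslant 0} x_{-k}^- z^{-k} A^-(z) = \tfrac{z}{1-q^2}\co A^-(z), x_1^- \cf, \qquad A^-(z) \sum_{l \geqslant 0} x_{-1-l}^+ z^{-l} = \tfrac{z}{1-q^2}\co x_0^+, A^-(z) \cf. \]
Substituting, the prefactor $(1-q^2)^2 q^{-1} z^{-1}$ cancels against the two factors $\frac{z}{1-q^2}$, leaving precisely $q^{-1} z$ and the desired formula.

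For the plus case, I would run the same scheme. One first establishes an analogue of the preceding proposition for $\Delta(A^+(z))$, either by redoing the $q$-exponential computation of \cite{Zhang} with positive and negative modes swapped, or by invoking an involution of $U_q$ exchanging $\phi^+ \leftrightarrow \phi^-$ and reversing the mode indices. The analogues of (1)--(4) of proposition \ref{interversionquantique} for $T^+$, together with the factorization $\A^{a+}(z) = a^{*+}(z) A^+(z)$ (normalized by the constant term $a_0$ rather than $a_{2n}$), then produce the claimed formula with prefactor $q z^{-1}$ via the same commute-and-reassemble argument.

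The principal difficulty is $q$-arithmetic bookkeeping. Each commutation of $T^\pm$ past an $x^\pm_n$ generates powers of $q$ depending both on the mode index and on any $q^{\pm 2}$-shift in the argument of $T^\pm$, and the collapse $(1-q^2)^2 q^{-1} z^{-1} \cdot \po \tfrac{z}{1-q^2} \pf^2 = q^{-1} z$ (together with its plus-case counterpart) is tight and unforgiving. A secondary subtlety is the plus case itself: shift conventions invert, the summation ranges in the analogues of (3)--(4) change, and one must verify without spurious signs or factors of $q$ that the $T^+$-analogue of the intermediate proposition indeed takes the form required for the prefactor $q z^{-1}$ to emerge.
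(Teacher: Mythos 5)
Your proposal is correct and follows essentially the same route as the paper: reduction to $a=b=1$ via the multiplicative scalar factorization $\A^{a\pm}(z)=a^{*}(z)A^{\pm}(z)$, expansion of the intermediate $q$-exponential formula for $\Delta(A^-(z))$ using the reordering relations of proposition \ref{interversionquantique}, identification of the resulting sums with the commutators $\co A^-(z),x_1^-\cf$ and $\co x_0^+,A^-(z)\cf$, and deduction of the $+$ case from the $-$ case by a mode-reversing (anti-)isomorphism, which is exactly Beck's $\Omega$ used in the paper.
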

    
\begin{proof}
    We have $\A^{a+}(z) = a^*(z)A^+(z)$ where $a^*(z)$ is the unique power series in $z$ of constant term $1$ that solves the difference equation $$a(z) = a^*(z)a^*(zq^{-2}).$$ The map $a(z) \mapsto a^*(z)$ is multiplicative. Similarly, $\A^{a-}(z) = a^\sharp(z)A(z)$ where $a^\sharp(z)$ is the unique power series in $z^{-1}$ of constant term $1$ that solves the difference equation $$a(z) = a^\sharp(z)a^\sharp(zq^{-2}).$$ The map $a(z) \mapsto a^\sharp(z)$ is multiplicative.
    As in the case of Yangian, it suffices to prove the formula for $A^\pm(z)$.

    We use (3) of proposition \ref{interversionquantique} to reorder the terms of the tensor product. We then get an expression of the coproduct depending only on $A^-(z)$ :
\[ \Delta \po A^-(z) \pf = A^-(z) \otimes A^-(z) + (1-q^2)^2 q^{-1} \sum\limits_{k,l \geqslant 0} x^-_{-k} z^{-k} A^-(z) \otimes A^-(z)x^+_{-l-1} z^{-l-1} . \]
    Then (4) of proposition \ref{interversionquantique} concludes.

    To compute $\Delta(A^+(z))$, we use the anti-isomorphism $\Omega$ introduced in \cite[(1.3)]{Beck} as the algebra and coalgebra anti-isomorphism $\Omega: U_q \longrightarrow U_{q^{-1}}$ determined by
    $$ x_n^+ \mapsto x_{-n}^-, \quad x_n^- \mapsto x_{-n}^+, \quad h_n \mapsto h_{-n}. $$ Moreover $\Omega(A^-(z^{-1})) = A^+(z)$, so the coproduct of $A^+(z)$ follows from the coproduct of $A^-(z)$.
\end{proof}

\section{Coproducts for truncated quantum affine algebras}
As for Yangians, we shall now prove the compatibility between truncation and coproduct.
\begin{theoreme}\label{compatibilityaffine}
    Let $a(z)$ and $b(z)$ complex polynomials of non-zero constant terms of even degrees. Then the coproduct of the quantum affine algebra induces an algebra morphism :
    \[ U_q^{ab} \longrightarrow U_q^a \otimes U_q^b.\]
\end{theoreme}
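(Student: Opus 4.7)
My plan follows the Yangian template of Section 3: set up the commutative square
\[\xymatrix{
    U_q \ar[r]^\Delta \ar[d]^{\pi_{ab}} & U_q \otimes U_q \ar[d]^{\pi_a \otimes \pi_b} \\
    U_q^{ab} & U_q^a\otimes U_q^b
  }\]
and verify that $(\pi_a\otimes\pi_b)\circ\Delta$ annihilates each defining relation of $U_q^{ab}$. Writing $\deg(a)=2m$ and $\deg(b)=2n$, Definition \ref{troncatureaffine} produces three families of relations to be checked: (i) the coefficients $\A^{ab+}_k$ for $k>m+n$; (ii) the bridge relations $\A^{ab-}_k - \A^{ab-}_{-(m+n)}\A^{ab+}_{k+m+n}$ for $k\in\Z$; and (iii) the top-coefficient identity $a_{2m}b_{2n}(\phi_0^+)^2 - a_0 b_0 q^{2(m+n)}(\A^{ab+}_{m+n})^2$.

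First I would handle (i) and (iii) together via the first formula of Theorem \ref{coproduitaffinemoins}. In $U_q^a\otimes U_q^b$ the term $\pi_a(\A^{a+}(z))\otimes\pi_b(\A^{b+}(z))$ is a polynomial in $z$ of degree at most $m+n$, with leading coefficient $\pi_a(\A^{a+}_m)\otimes\pi_b(\A^{b+}_n)$. In the second summand $qz^{-1}[\pi_a(\A^{a+}(z)),x_0^-]\otimes[x_{-1}^+,\pi_b(\A^{b+}(z))]$, each commutator vanishes at the constant term in $z$ since $\A^{a+}_0=\A^{b+}_0=1$, so it is divisible by $z$; the whole summand is therefore a polynomial of degree at most $m+n-1$. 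This yields (i), and the leading-coefficient computation, combined with $\Delta(\phi_0^+)=\phi_0^+\otimes\phi_0^+$ and the relation (3) applied separately in $U_q^a$ and $U_q^b$, gives (iii).

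The main obstacle is (ii). First I would observe, via the second formula of Theorem \ref{coproduitaffinemoins} and a parallel degree count, that the coefficient of $z^{-(m+n)}$ in $(\pi_a\otimes\pi_b)\Delta(\A^{ab-}(z))$ is exactly $\pi_a(\A^{a-}_{-m})\otimes\pi_b(\A^{b-}_{-n})$; this reduces (ii) to verifying the Laurent-series identity
\[ (\pi_a\otimes\pi_b)\Delta(\A^{ab-}(z)) = \bigl(\pi_a(\A^{a-}_{-m})\otimes\pi_b(\A^{b-}_{-n})\bigr)\, z^{-(m+n)}\, (\pi_a\otimes\pi_b)\Delta(\A^{ab+}(z)) \]
in $U_q^a\otimes U_q^b$. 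To prove it, I would expand both sides via Theorem \ref{coproduitaffinemoins}, then use the relations $\pi_a(\A^{a-}(z))=\pi_a(\A^{a-}_{-m})z^{-m}\pi_a(\A^{a+}(z))$ and its $b$-analog (coming from (ii) inside $U_q^a$ and $U_q^b$) to pull the $-$ series on the left back to the $+$ series. The technical crux will be rewriting the commutators $[\pi_a(\A^{a-}(z)),x_1^-]$ and $[x_0^+,\pi_b(\A^{b-}(z))]$ so that they match the commutators with $x_0^-$ and $x_{-1}^+$ on the right; Corollary \ref{twoformulas} provides the required $q$-commutator conversion, and the Leibniz-rule correction terms that arise are controlled using the commutativity of the $A$-series coefficients inside the Drinfeld--Cartan subalgebra of $U_q$.
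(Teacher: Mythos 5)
Your proposal is correct and follows essentially the same route as the paper: verify the three defining relations of $U_q^{ab}$ under $(\pi_a\otimes\pi_b)\circ\Delta$, using degree counting on the formulas of Theorem \ref{coproduitaffinemoins} for the polynomiality and top-coefficient conditions, and reducing the bridge relations to the identity $\A^{a-}(z)=\A^{a-}_{-m}z^{-m}\A^{a+}(z)$ applied in each tensor factor together with the $q$-commutativity relations and Corollary \ref{twoformulas}. The paper carries out step (ii) coefficient by coefficient rather than as a single Laurent-series identity, but the computation is the same.
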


\begin{proof}
    We will check if the three conditions of definition \ref{troncatureaffine} are satisfied. We write $2m_a$ and $2m_b$ as degrees of $a(z)$ and $b(z)$ respectively. 

    For the first one, this is the same argument as for Yangian. Just note that, for $\A^{a-}(z)$ of degree $m_a$
    $\co \A^{a-}(z),x_1^- \cf$ is of degree $m_a - 1$ (and similarly for $b(z)$). 

    For the second one, note that 
    \begin{gather*} 
    \Delta(\A^{ab-}_{-m_a - m_b}) = \A^{a-}_{-m_a} \otimes\A^{b-}_{-m_b}; \\
    \quad \Delta(\A^{ab-}_{k})= \sum\limits_{i \in \Z} \A^{a-}_i \otimes \A^{b-}_{k-i} + q^{-1}\sum\limits_{i\in \Z} \co \A^{a-}_{i-1},x_{1}^- \cf \otimes \co x_0^+,\A^{b-}_{k-i} \cf; \\
    \quad \Delta(\A^{ab+}_{m_a+m_b+k})= \sum\limits_{i \in \Z} \A^{a+}_{m_a +i} \otimes \A^{b+}_{m_b + k-i} + q^{-1}\sum\limits_{i\in \Z} \co \A^{a-}_{m_a + i},x_{0}^- \cf \otimes \co x_{-1}^+,\A^{b-}_{m_b + k-i + 1} \cf.
    \end{gather*}

    We start by computing, using the second relation of the definition \ref{troncatureaffine} in $U_q^a$ and $U_q^b$, the coproduct $\Delta(\A^{ab-}_{-m_a-m_b}\A^+_{m_a + m_b + k})$ which is equal to :
    \[ \sum\limits_{i} \A_i^{a-} \otimes \A^{b-}_{k-i} + q \sum\limits_{j} \po \A_j^{a-} x_0^- - \A_{-m_a}^{a-}x_0^- \A_{m_a+j}^{a-} \pf\otimes \po \A_{-m_b}^{b-}x_{-1}^+ \A^{b-}_{m_b+k-j+1} - \A_{k-j+1}^{b+}x_{-1}^+\pf .\]

    Let us recall the relations of q-commutativity from proposition \ref{interversionquantique} :
    \[ \A_{-m_a}^{a-} x_0^- = q^2x_0^-\A^{a-}_{-m_a}; \quad \A_{-m_b}^{b-}x_{-1}^+ = q^{-2}x_{-1}^+\A^{b-}_{-m_b}.\]

    One can then check, using corollary \ref{twoformulas}
    \[ \A^{a-}_{j}x_0^- - q^2 x_0^- \A_j^{a-} = \co A_{j-1}^{a-},x_1^- \cf; \quad q^{-2} x_{-1}^+ A^{b-}_{k-j+1} - A^{b-}_{k-j+1} x_{-1}^+ = q^{-2} \co x_0^+,A^{b-}_{k-j} \cf \]
    from which we deduce the result.
    
    For the third one, we aim to check if
    \[ (ab)_{2(m_a+m_b)}(\pi_a\otimes \pi_b)\circ\Delta(\phi_0^+)^2 = (ab)_0 (\pi_a\otimes \pi_b)\circ  \Delta(\A^{ab+}_{m_a+m_b})^2q^{2(m_a+m_b)}.\]
    But, on one hand :
    \begin{align*} 
        (ab)_{2(m_a+m_b)}(\pi_a\otimes \pi_b)\circ\Delta(\phi_0^+)^2 &= a_{2m_a}(\phi_0^+)^2 \otimes b_{2m_b} (\phi_0^+)^2 \\
        &= a_0(\A_{m_a}^{a+})^2 q^{2m_a} \otimes b_0 (\A_{m_b}^{b+})^2 q^{2m_b}
    \end{align*}
    and, on the other hand :
    \begin{align*}
        (ab)_0(\pi_a\otimes \pi_b)\circ\Delta(\A_{m_a+m_b}^{ab+})^2q^{2(m_a+m_b)} = a_0(\A_{m_a}^{a+})^2 q^{2m_a} \otimes b_0 (\A_{m_b}^{b+})^2q^{2m_b}.
    \end{align*}
\end{proof}

As an application of this result, we will consider the representations of $U_q$ induced from irreducible representations of $\ssl_2$. For $a \in \C^\times$ and $n \in \N$, let us describe such a representation of dimension $n+1$ by a basis (as in \cite[Example 5.6]{ZhangHernandez}) with the action :
\[ \phi^{\pm}(z) \cdot v_i = q^{n-2i} \frac{(1-q^{-n-1}az)(1-q^{n+1}az)}{(1-q^{n-1-2i}az)(1-q^{n+1-2i}az)}v_i; \quad (x_0^-)^i \cdot v_0 = v_i.\]
We denote by $L_n(a)$ this representation. It is the finite dimensional irreducible representation associated to the Drinfeld polynomial $(1-q^{n-1}az)(1-q^{n-3}az)\dots(1-q^{-n+1}az)$ in \cite{ChariPressley}.

Set $f(z) := (1-q^{-n-1}az)(1-q^{n+1}az)$. Then the action of $\A^{f\pm}(z)$ is diagonal and polynomial :
\[ \A^{f+}(z)v_i = (1-q^{n-1-2i}az)v_i; \quad \A^{f-}(z)v_i = (1-q^{-n+1+2i}a^{-1}z^{-1})v_i. \]
As a consequence, $L_n(a)$ is representation over $U_q^f$.

Let $b \in \C^*$ and $m \in \N$. Consider the representation $L_m(b)$ with basis $(w_j)_{0 \leqslant j \leqslant m}$ as above. We will need a new basis defined by $$v'_j : =\po(x^+_{-1})^j \cdot w_m\pf_{0 \leqslant j \leqslant m}.$$ Set $g(z) := (1-q^{-m-1}bz)(1-q^{m+1}bz)$. The action of $\A^{fg+}(z)$ on $L_n(a)\otimes L_m(b)$ is given by :
\[ \A^{fg+}(z)\cdot v_i\otimes v'_j = (1-q^{n-1-2i}az)(1-q^{-m-1+2j}bz) v_i \otimes v'_j + zabq^{n-m-2(i-j)}(q-q^{-1})^2 v_{i+1}\otimes v'_{j+1}. \]
By theorem \ref{compatibilityaffine}, the tensor product $L_n(a) \otimes L_m(b)$ is a module over $U_q^{fg}$, we can then apply (1) of definition \ref{troncatureaffine} to compute the action of $A^{fg-}(z)$ :
\[ \A^{fg-}(z)\cdot v_i\otimes v'_j = (1-q^{-n+1+2i}a^{-1}z^{-1})(1-q^{m+1-2j}b^{-1}z^{-1}) v_i \otimes v'_j + z^{-1}(1-q^2)^2 v_{i+1}\otimes v'_{j+1}. \]

\section{Compatibility in the shifted cases}
Our main results can be adapted to the shifted case. We state the results without proofs, as they follow from the non-shifted cases by standard zigzag arguments.

\subsection{Shifted Yangians}
\begin{definition}
    Let $\mu \in \Z$. The \textbf{shifted Yangian} $Y_\mu$ associated to $\ssl_2$ is the associative algebra defined by generators $x^-_n,x^+_n,\xi_p$ for all $n \in \N$, $p \in \Z$ satisfying the following relations :
    \begin{align*}
        \co \xi_p,\xi_q\cf = 0, &\quad \co x^+_n,x^-_m \cf = \xi_{n+m}, \\ 
        \co \xi_{p+1},x^\pm_n \cf - \co \xi_p,x^\pm_{n+1} \cf &= \pm \po \xi_p x^\pm_n + x^\pm_n\xi_p \pf,  \\
        \co x^\pm_{m+1},x^\pm_n \cf - \co x^\pm_m,x^\pm_{n+1} \cf &= \pm \po x^\pm_m x^\pm_n + x^\pm_n x^\pm_m \pf, \\
        \xi_{-\mu-1} = 1, &\quad  \xi_p = 0 \text{ for }p < - \mu - 1.
    \end{align*}
\end{definition}

Define the generating series :
\[ \xi(z) := \sum\limits_{p \in \Z} \xi_p z^{-p-1}.\]
This is a Laurent series in $z^{-1}$ of leading term $z^\mu$. Its coefficients generate a commutative subalgebra of $Y_\mu$, denoted by $Y^=_\mu$.

The standard Yangian correspond to the case $\mu = 0$. A family of algebra homomorphisms $$\Delta_{\mu,\nu} : Y_{\mu + \nu} \longrightarrow Y_\mu \otimes Y_\nu$$ for shifted Yangians has been introduced in a compatible way with the standard Yangian \cite[Corollary 3.16, Theorem 4.12, Proposition 4.14]{shiftedcoproduct}.

\begin{definition}\cite[Lemma 2.1]{GKLO}
    Let $m \in \N, \mu \in \Z$ and $a(z)$ a complex Laurant series in $z^{-1}$ of leading term  $z^{2m+ \mu}$. The GKLO series associated to $a(z)$ is the Laurent series in $z^{-1}$ of leading term $z^m$ and coefficients in $Y_\mu^=$ uniquely determined by :
    \[ \xi(z) = \frac{a(z)}{A_\mu^a(z)A_\mu^a(z-1)}.\]
     The truncated Yangian $Y_\mu^a$ is the quotient of $Y_\mu$ by the coefficients of strictly negative powers of $A^a_{\mu}(z)$.
\end{definition}

To compute the coproduct of $A$-series, we use the result of standard Yangian to get formulas for shifted Yangians by the zigzag properties of $\Delta_{\mu,\nu}$. We naturally get :

\begin{theoreme}
    For $\mu,\nu \in \Z, m,n \in \N$ and $a(z) \in z^{2m+\mu}(1 + z^{-1}\C[[z^{-1}]])$, $b(z) \in z^{2n + \nu}(1 + z^{-1}\C[[z^{-1}]])$, we have :
    \[ \Delta_{\mu,\nu}(A^{ab}_{\mu+\nu}(z)) = A^a_\mu(z) \otimes A^b_\nu(z) + \co A^a_\mu(z),x^{-}_{0}\cf \otimes \co x^{+}_{0},A^b_\nu(z) \cf. \]

    Moreover, $\Delta_{\mu,\nu}$ induces an algebra morphism $ Y^{ab}_{\mu + \nu} \longrightarrow Y^a_\mu \otimes Y^b_\nu$.
\end{theoreme}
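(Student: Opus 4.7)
The plan is to bootstrap both assertions from the unshifted case $\mu=\nu=0$, already treated in Theorem \ref{formulecopro} and the compatibility theorem of Section 3, via the zigzag property of the family $\{\Delta_{\mu,\nu}\}$ with respect to the standard Drinfeld--Jimbo coproduct $\Delta = \Delta_{0,0}$ as developed in \cite{shiftedcoproduct}.

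The first step is to set up a dictionary between shifted and unshifted GKLO series. For suitable shifts there exist algebra morphisms $\iota_\mu$ relating $Y_\mu$ to (a localisation of) $Y=Y_0$, under which the shifted Cartan series $\xi(z)$ corresponds to $z^\mu h(z)$ while $x_0^\pm$ is preserved; consequently $A^a_\mu(z) \in Y_\mu$ is identified with $A^{\tilde a}(z) \in Y$ for a rescaling $\tilde a$ of $a$, and the assignment $a \mapsto \tilde a$ is multiplicative. The essential input is the zigzag identity
\[ (\iota_\mu \otimes \iota_\nu) \circ \Delta_{\mu,\nu} = \Delta_{0,0} \circ \iota_{\mu+\nu}, \]
which is built into the definition of $\Delta_{\mu,\nu}$ in loc. cit. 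Applying Theorem \ref{formulecopro} to $\iota_{\mu+\nu}(A^{ab}_{\mu+\nu}(z)) = A^{\tilde a \tilde b}(z)$, then pulling back along the injectivity of $\iota_\mu\otimes\iota_\nu$ on the subalgebra generated by the $A$-series coefficients and $x_0^\pm$, yields the announced formula
\[ \Delta_{\mu,\nu}(A^{ab}_{\mu+\nu}(z)) = A^a_\mu(z) \otimes A^b_\nu(z) + \co A^a_\mu(z), x_0^- \cf \otimes \co x_0^+, A^b_\nu(z) \cf. \]

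The compatibility with truncation then follows verbatim from the argument of Section 3: by definition of $Y^a_\mu$ and $Y^b_\nu$, the projections $\pi_a(A^a_\mu(z))$ and $\pi_b(A^b_\nu(z))$ are polynomials in $z$, hence so are their commutators with $x_0^\mp$. The coproduct formula forces $(\pi_a\otimes\pi_b)\circ\Delta_{\mu,\nu}(A^{ab}_{\mu+\nu}(z))$ to be polynomial in $z$, which annihilates the truncation ideal of $Y^{ab}_{\mu+\nu}$ and lets $\Delta_{\mu,\nu}$ descend to the desired algebra morphism.

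The main obstacle is the book-keeping of the monomial twist $z^\mu$ linking shifted and unshifted Cartan series: one must verify that the rescaling $a \mapsto \tilde a$ is multiplicative, that the difference equation defining $A^a_\mu(z)$ transports correctly to the one defining $A^{\tilde a}(z)$ in $Y$, and that the zigzag identity holds on the generators relevant to the formula. Once this dictionary is fixed, both parts of the theorem reduce mechanically to the unshifted results; this is why the author states them without proof as a direct generalisation.
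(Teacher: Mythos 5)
Your proposal matches the paper's intent exactly: the paper states this theorem without proof, saying only that the formulas are obtained from the unshifted case (Theorem \ref{formulecopro}) via the zigzag/compatibility properties of the $\Delta_{\mu,\nu}$ from \cite{shiftedcoproduct}, and that the truncation compatibility then follows as in Section 3 — which is precisely the reduction you describe, including the polynomiality argument for the descent to $Y^{ab}_{\mu+\nu}\to Y^a_\mu\otimes Y^b_\nu$. The only point to be careful about (which the paper also leaves implicit) is the precise form and direction of the shift homomorphisms relating $Y_\mu$ to $Y_0$ and the multiplicativity of the induced rescaling $a\mapsto\tilde a$, but your sketch correctly identifies these as the items to verify.
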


\subsection{Shifted quantum affine algebras}
\begin{definition}
For $\mu \in \Z$, the \textbf{shifted quantum affine algebra}  $U_\mu$ associated to $\ssl_2$ is the associative algebra defined by generators $x^\pm_{m}$, $\phi^\pm_m$ for $m \in \Z$ satisfying the following relations, for $\epsilon \in \{\pm 1\},n,m \in \Z$ :
    \begin{align*}
        \phi^+_m = 0 \text{ if } m < 0; \quad \phi_m^- = 0& \text{ if } m > \mu; \quad \phi_0^+\phi_\mu^- \text{ is central and invertible}; \\
        \co \phi^\pm_m, \phi^\epsilon_n\cf = 0;& \quad \co x^+_m, x^-_n \cf = \frac{\phi^+_{m+n} - \phi^-_{m+n}}{q - q^{-1}}; \\
        \phi^\epsilon_{m+1}x_n^\pm - q^{\pm2} \phi^\epsilon_m &x^\pm_{n+1} = q^{\pm2}x_n^\pm\phi^\epsilon_{m+1} -  x^\pm_{n+1} \phi_m^\epsilon; \\
        x^\pm_{m+1}x_n^\pm - q^{\pm2} x^\pm_m &x^\pm_{n+1} = q^{\pm2}x_n^\pm x^\pm_{m+1} - x_m^\pm x^\pm_{n+1}.
    \end{align*}
\end{definition}

\begin{remark}\label{nonstandard}
    Contrary to Yangian, the case $\mu = 0$ is not the standard quantum affine algebra : condition $\phi^+_0\phi^-_0 = 1$ has been relaxed.
\end{remark}

Define the generating series :
\[ \phi^\pm(z) := \sum\limits_{n\in\Z} \phi^\pm_nz^{n}.\]

As for shifted Yangians, we have a family of algebra homomorphisms
$$ \Delta_{\mu,\nu} : U_{\mu + \nu} \longrightarrow U_\mu \otimes U_\nu$$ interpreted as a generalization of the standard coproduct is introduced in \cite[Theorem 10.26]{Finkeliuk}.

\begin{definition}\cite{Finkeliuk}
    Let $n\in \N, \mu \in \Z$ and $a(z)$ a complex polynomial with dominant term $a_{2n+\mu}z^{2n +\mu}$ and non-zero constant term $a_0$. Define the GKLO series $\A_\mu^{a,+}(z)$ and $\A_\mu^{a,-}(z)$ to be power series of constant term $1$ in $z$ and $z^{-1}$ respectively by 
    \[ \phi^+(z) = \frac{\phi_0^+a(z)a_0^{-1}}{\A_\mu^{a+}(z)\A_\mu^{a+}(zq^2)}; \quad
    \phi^-(z) = \frac{\phi_\mu^-a(z)a_{2n+\mu}^{-1}z^{-2n}}{\A_\mu^{a-}(z)\A_\mu^{a-}(zq^2)}. \]

    One can write
    \[ \A_\mu^{a\pm}(z) = \sum\limits_{k \in \Z}\A_{k}^{a,\pm} z^{k}\]
    so that $\A_k^{a,+} = 0 = \A_{-k}^{a,-}$ if $k < 0$ and $\A_0^{a,\pm} = 1$. 
    The truncated algebra $U_\mu^a$ is the quotient algebra of $U_\mu$ by the following relations for $k \in \Z$ :
    \begin{gather*}
        \A^{a+}_k = 0 \text{ for } k > n; \quad \A^{a-}_k = \A^{a-}_{-n}
        \A^{a+}_{k+n}; \\
        a_{2n + \mu}\phi_0^+ = a_0\phi_\mu^- (\A^{a,+}_{n})^2q^{2n}; \\
        \phi_0^+\phi_0^- = a_0^{-1}a_{2n+\mu}q^{-2n}.
    \end{gather*}
\end{definition}

\begin{remark}
    The definition of truncation used here follows \cite[(3.22)-(3.24)]{ZhangHernandez} which is a slight modification of \cite{Finkeliuk}. We refer to \cite[Remark 3.11]{ZhangHernandez} for a comparison with original definition of \cite{Finkeliuk}.
\end{remark}

\begin{theoreme}
    Let $\mu,\nu \in \Z$, $a(z)$ and $b(z)$ complex polynomials of non-zero constant terms such that $\deg(a(z)) - \mu$ and $\deg(b(z)) - \nu$ are even. We have :
    \begin{align*}
     \Delta_{\mu,\nu}(\A_{\mu+\nu}^{ab-}(z)) &= \A_\mu^{a-}(z)\otimes \A_\nu^{b-}(z) + q^{-1}z \co \A_\mu^{a-}(z),x_{1}^- \cf \otimes \co x^+_{0},\A_\nu^{b-}(z) \cf.\\
     \Delta_{\mu,\nu}(\A_{\mu+\nu}^{ab+}(z)) &= \A_\mu^{a+}(z)\otimes \A_\nu^{b+}(z) + qz^{-1} \co \A_\mu^{a+}(z),x_{0}^- \cf \otimes \co x^+_{-1},\A_\nu^{b+}(z) \cf.
    \end{align*}
    Moreover, $\Delta_{\mu,\nu}$ induces an algebra morphism :
        \[ U^{ab}_{\mu + \nu} \longrightarrow U^a_\mu \otimes U^b_\nu.\]
\end{theoreme}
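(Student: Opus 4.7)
The plan is to mimic the approach sketched for shifted Yangians, transferring Theorem~\ref{coproduitaffinemoins} and Theorem~\ref{compatibilityaffine} to the shifted setting via the zigzag/coassociativity identities satisfied by the family $\{\Delta_{\mu,\nu}\}$ introduced in \cite[Theorem 10.26]{Finkelyuk}. As in the unshifted case, there is a multiplicative reduction $\A_\mu^{a\pm}(z) = a^{\sharp/\star}(z)A_\mu^\pm(z)$ where the scalar factors $a^*(z)$, $a^\sharp(z)$ depend only on $a(z)$ (and where $A_\mu^\pm(z) := \A_\mu^{1,\pm}(z)$), with the operations $a \mapsto a^*$ and $a \mapsto a^\sharp$ being multiplicative. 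It therefore suffices to establish the formula for $A_\mu^\pm(z)$.

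For the coproduct formula, I would redo the computation of Theorem~\ref{coproduitaffinemoins} directly inside $U_\mu \otimes U_\nu$. The key inputs are the $q$-exponential identities of Proposition~\ref{qexpo} and the intertwining relations of Proposition~\ref{interversionquantique} between $A^\pm(z)$, $T^\pm(z)$ and the $x^\pm_n$; both are relations among the $x^\pm_n$ and $h_n$ generators and remain valid in the shifted algebra. The formulas for $\Delta_{\mu,\nu}$ on $x^\pm_0$, $x^\pm_{-1}$ and $h_{\pm 1}$ from \cite{Finkelyuk} agree with those of Section~4 up to shift-dependent correction terms which are absorbed by the same algebraic simplifications as in the unshifted proof, so the final identity is formally the one stated.

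For the compatibility with truncation, I would verify the three defining relations of $U_\mu^a \otimes U_\nu^b$ on the image of $\pi_a \otimes \pi_b$. The first relation (polynomial vanishing in appropriate degrees) and the second relation $\A^{a-}_k = \A^{a-}_{-n}\A^{a+}_{k+n}$ are checked exactly as in the proof of Theorem~\ref{compatibilityaffine}, using Corollary~\ref{twoformulas} and the $q$-commutativity relations of Proposition~\ref{interversionquantique}; neither argument depends on $\mu$, once the coproduct formula of the previous step is in hand.

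The hard part will be the third relation, which is specific to the shifted setting: $\phi_0^+\phi_0^- = a_0^{-1}a_{2n+\mu}q^{-2n}$, together with $a_{2n+\mu}\phi_0^+ = a_0\phi_\mu^-(\A^{a+}_n)^2 q^{2n}$. In the unshifted case this was trivialized by $\phi_0^+\phi_0^- = 1$, whereas now $\phi_0^+\phi_\mu^-$ is a genuine central invertible element. I would carefully track $\Delta_{\mu,\nu}(\phi_0^+)$, $\Delta_{\mu,\nu}(\phi_0^-)$ and $\Delta_{\mu,\nu}(\phi_\mu^-)$ via the explicit formulas in \cite{Finkelyuk}, then reduce the verification to the multiplicativity of the constant and leading coefficients, namely $(ab)_0 = a_0b_0$ and $(ab)_{2(n+m)+\mu+\nu} = a_{2n+\mu}b_{2m+\nu}$, combined with the grouplike-like behaviour of $\phi_0^\pm$ and $\phi_\mu^-$ on the appropriate tensor factors. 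Once this scalar identity is matched, the induced map $U_\mu^{ab} \otimes U_\nu^{ab} \longrightarrow U_\mu^a \otimes U_\nu^b$ is well defined, which concludes the proof.
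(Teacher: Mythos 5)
The paper itself states this theorem \emph{without proof} (Section 7 opens with ``We state the results without proofs''), so there is no argument of record to compare against; the only hint given is, as in the shifted Yangian subsection, to transfer the unshifted formulas via the zigzag/shift-compatibility properties of $\Delta_{\mu,\nu}$. Your outline follows that natural strategy, but the two places where you wave your hands are exactly where the content of a real proof would lie. First, you assert that Propositions \ref{coproduitT} and \ref{interversionquantique} ``remain valid in the shifted algebra'' and that the shift-dependent corrections in $\Delta_{\mu,\nu}$ ``are absorbed by the same algebraic simplifications.'' Remark \ref{nonstandard} warns that even $U_0$ is not $U_q$ (the relation $\phi_0^+\phi_0^-=1$ is dropped), so neither the definition of the $T$-series nor their coproduct is available for free in $U_\mu$; one must either re-establish them there or argue via the shift homomorphisms and the compatibility of $\Delta_{\mu,\nu}$ with them, and the latter is precisely the mechanism the paper points to. Simply re-running the Section 5 computation ``inside $U_\mu\otimes U_\nu$'' presupposes what needs to be checked.

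Second, your treatment of the third truncation relation is not correct as stated. For $\mu+\nu\neq 0$ the coefficient $\phi_0^-$ is an interior coefficient of $\phi^-(z)=\sum_{m\leqslant\mu+\nu}\phi_m^-z^m$ (the group-like, extremal coefficients are $\phi_0^+$ and $\phi_{\mu+\nu}^-$), so $\Delta_{\mu,\nu}(\phi_0^-)$ is \emph{not} group-like and the ``grouplike-like behaviour of $\phi_0^\pm$'' cannot carry the verification of $\phi_0^+\phi_0^-=a_0^{-1}a_{2n+\mu}q^{-2n}$; you would need either the explicit expansion of $\Delta_{\mu,\nu}(\phi^-(z))$ or to deduce this relation in the quotient from the ones already verified (e.g.\ from $\A^{a-}_k=\A^{a-}_{-n}\A^{a+}_{k+n}$ together with the definition of $\A^{a\pm}$ in terms of $\phi^\pm$). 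Two minor points: the relation $a_{2n+\mu}\phi_0^+=a_0\phi_\mu^-(\A^{a+}_n)^2q^{2n}$, which you fold into ``the third relation,'' is the analogue of the one actually checked in Theorem \ref{compatibilityaffine} and should be handled by the same leading-coefficient computation; and your final displayed map should read $U_{\mu+\nu}^{ab}\longrightarrow U_\mu^a\otimes U_\nu^b$, not $U_\mu^{ab}\otimes U_\nu^{ab}\longrightarrow U_\mu^a\otimes U_\nu^b$.
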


\bibliography{biblio.bib}

\begin{thebibliography}{10}

\bibitem{Beck}
Jonathan Beck.
\newblock Braid group action and quantum affine algebras.
\newblock {\em Commun. Math. Phys.}, 165(3):555--568, 1994.

\bibitem{BFN}
Alexander Braverman, Michael Finkelberg, and Hiraku Nakajima.
\newblock Coulomb branches of {{\(3d\)}} {{\(\mathcal{N}=4\)}} quiver gauge theories and slices in the affine {Grassmannian}.
\newblock {\em Adv. Theor. Math. Phys.}, 23(1):75--166, 2019.

\bibitem{Brunchev}
Jonathan Brundan and Alexander Kleshchev.
\newblock {\em Representations of shifted {Yangians} and finite {{\(W\)}}-algebras}, volume 918 of {\em Mem. Am. Math. Soc.}
\newblock Providence, RI: American Mathematical Society (AMS), 2008.

\bibitem{ChariPressleyYangian}
Vyjayanthi Chari and Andrew Pressley.
\newblock Yangians and {R}-matrices.
\newblock {\em Enseign. Math. (2)}, 36(3-4):267--302, 1990.

\bibitem{ChariPressley}
Vyjayanthi Chari and Andrew Pressley.
\newblock Quantum affine algebras.
\newblock {\em Commun. Math. Phys.}, 142(2):261--283, 1991.

\bibitem{NewDrinfeld}
V.~G. Drinfel'd.
\newblock A new realization of {Yangians} and quantized affine algebras.
\newblock {\em Sov. Math., Dokl.}, 36(2):212--216, 1988.

\bibitem{shiftedcoproduct}
Michael Finkelberg, Joel Kamnitzer, Khoa Pham, Leonid Rybnikov, and Alex Weekes.
\newblock Comultiplication for shifted {Yangians} and quantum open {Toda} lattice.
\newblock {\em Adv. Math.}, 327:349--389, 2018.

\bibitem{Finkeliuk}
Michael Finkelberg and Alexander Tsymbaliuk.
\newblock Multiplicative slices, relativistic {Toda} and shifted quantum affine algebras.
\newblock {\em Representations and nilpotent orbits of Lie algebraic systems.}, 330:133--304, 2019.

\bibitem{FrenkelHernandez}
Edward Frenkel and David Hernandez.
\newblock Baxter's relations and spectra of quantum integrable models.
\newblock {\em Duke Math. J.}, 164(12):2407--2460, 2015.

\bibitem{Frenkikhin}
Edward Frenkel and Nicolai Reshetikhin.
\newblock The {{\(q\)}}-characters of representations of quantum affine algebras and deformations of {{\(W\)}}-algebras.
\newblock {\em Recent developments in quantum affine algebras and related topics.}, pages 163--205, 1999.

\bibitem{GKLO}
A.~Gerasimov, S.~Kharchev, D.~Lebedev, and S.~Oblezin.
\newblock On a class of representations of the {Yangian} and moduli space of monopoles.
\newblock {\em Commun. Math. Phys.}, 260(3):511--525, 2005.

\bibitem{Hernandez}
David Hernandez.
\newblock Representations of shifted quantum affine algebras.
\newblock {\em Int. Math. Res. Not.}, 2023(13):11035--11126, 2023.

\bibitem{ZhangHernandez}
David Hernandez and Huafeng Zhang.
\newblock Jordan-{H{\"o}lder} property for shifted quantum affine algebras.
\newblock Preprint, {arXiv}:2501.16859 [math.{QA}] (2025), 2025.

\bibitem{KTWWY1}
Joel Kamnitzer, Peter Tingley, Ben Webster, Alex Weekes, and Oded Yacobi.
\newblock Highest weights for truncated shifted {Yangians} and product monomial crystals.
\newblock {\em J. Comb. Algebra}, 3(3):237--303, 2019.

\bibitem{KTWWY2}
Joel Kamnitzer, Peter Tingley, Ben Webster, Alex Weekes, and Oded Yacobi.
\newblock On category {{\(\mathcal O\)}} for affine {Grassmannian} slices and categorified tensor products.
\newblock {\em Proc. Lond. Math. Soc. (3)}, 119(5):1179--1233, 2019.

\bibitem{KWWY}
Joel Kamnitzer, Ben Webster, Alex Weekes, and Oded Yacobi.
\newblock Yangians and quantizations of slices in the affine {Grassmannian}.
\newblock {\em Algebra Number Theory}, 8(4):857--893, 2014.

\bibitem{Kassel}
Christian Kassel.
\newblock {\em Quantum groups}, volume 155 of {\em Grad. Texts Math.}
\newblock New York, NY: Springer-Verlag, 1995.

\bibitem{Molev}
A.~I. Molev.
\newblock Yangians and their applications.
\newblock In {\em Handbook of algebra. Volume 3}, pages 907--959. Amsterdam: Elsevier, 2003.

\bibitem{Zhang}
Huafeng Zhang.
\newblock Theta series for quantum loop algebras and {Yangians}.
\newblock {\em Commun. Math. Phys.}, 405(10):68, 2024.
\newblock Id/No 230.

\end{thebibliography}
\bibliographystyle{plain}
\end{document}